\journalname{Annals of the Institute of Statistical Mathematics}
\newcommand{\ii}{\sqrt{-1}}
\newcommand\Cov{\mathrm{Cov}}
\newcommand\Cum{\mathrm{Cum}}
\newcommand\diag{\mathrm{diag}}
\newcommand\tr{\mathrm{tr}}
\newcommand\len{\mathrm{len}}
\newcommand\C{\mathbb{C}}
\newcommand\QED{\hfill$\Box$}
\begin{document}

\title{Graph presentations for moments of noncentral Wishart distributions
 and their applications 
}


\author{
Satoshi Kuriki and Yasuhide Numata
}


\institute{S.\ Kuriki \at
Institute of Statistical Mathematics, ROIS,
10-3 Midoricho, Tachikawa, Tokyo 190-8562, Japan.
\email{\texttt{kuriki@ism.ac.jp}}
\and
Y.\ Numata \at
Department of Mathematical Informatics, University of Tokyo, 
7-3-1 Hongo, Bunkyo-ku, Tokyo 113-0033, Japan;
Japan Science and Technology Agency (JST), CREST.
\email{\texttt{numata@stat.t.u-tokyo.ac.jp}}
}

\date{}

\maketitle

\begin{abstract}
We provide formulas for the moments of the real and complex noncentral
Wishart distributions
of general degrees.  The obtained formulas for the real and complex cases are
described in terms of the undirected and directed graphs, respectively.
By considering degenerate cases, we give explicit formulas
for the moments of bivariate chi-square distributions and $2\times 2$
Wishart distributions by enumerating the graphs.
Noting that the Laguerre polynomials can be considered to be moments of
a noncentral chi-square distributions formally, we demonstrate 
a combinatorial interpretation of the coefficients of the Laguerre polynomials.
\keywords{
Kibble's bivariate gamma distribution \and
Laguerre polynomial \and
Noncentral Stirling number of the first kind
}
\end{abstract}

\section{Introduction}
\label{section:introduction}

For $t=1,\ldots,\nu$, let $X_t=(x_{ti})_{1\le i\le p}$ be
a $p$-dimensional random column vector distributed independently
according to the normal distribution $N_p(\mu_t,\Sigma)$
with mean vector $\mu_t=(\mu_{ti})_{1\le i\le p}$
and covariance matrix $\Sigma=(\sigma_{ij})_{1\le i,j\le p}$.
We define the (real) noncentral Wishart distribution $W_p(\nu,\Sigma,\Delta)$
by the distribution of a $p\times p$ symmetric random matrix
\begin{equation}
\label{wishart-real}
 W = (w_{ij})_{1\le i,j\le p}, \quad
 w_{ij} = \sum_{t=1}^\nu x_{ti} x_{tj},
\end{equation}
where
$$ \Delta = (\delta_{ij})_{1\le i,j\le p}, \quad
 \delta_{ij} = \sum_{t=1}^\nu \mu_{ti} \mu_{tj} $$
is the mean square matrix.
The distribution of $W$ depends on $\mu_t$'s through $\Delta$
because its moment generating function is
\begin{equation}
\label{mgf-real}
 E\bigl[e^{\tr(\Theta W)}\bigr] = \det(I-2\Theta\Sigma)^{-\frac{\nu}{2}}
 e^{\tr(I-2\Theta\Sigma)^{-1}\Theta\Delta},
\end{equation}
where $\Theta$ is a $p\times p$ symmetric parameter matrix
 (\cite{Muirhead1982}).

Note that $\Delta=0$ if and only if $\mu_t=0$ for all $t$.
The Wishart distribution with $\Delta=0$ is referred to as the central
Wishart distribution $W_p(\nu,\Sigma)$.  Conventionally,
the triplet $(\nu,\Sigma,\Omega)$ with $\Omega=\Sigma^{-1}\Delta$
is used for describing the noncentral Wishart distribution
rather than $(\nu,\Sigma,\Delta)$.  The matrix $\Omega$
is called the noncentrality matrix.  In our paper, we adopt
the triplet $(\nu,\Sigma,\Delta)$ for 
simplicity in describing theorems.

For $t=1,\ldots,\nu$, let $\begin{pmatrix} X_t \\ Y_t \end{pmatrix}$
be a $2p$-dimensional random column vector distributed independently
according to the normal distribution with mean vector
$\begin{pmatrix}\xi_t \\ \eta_t \end{pmatrix}$
and covariance matrix
$\begin{pmatrix} A & -B \\ B & A \end{pmatrix}$,
where $A$ and $B$ are $p\times p$ symmetric and skew-symmetric matrices,
respectively.
The distribution of a complex-valued random vector
$Z_t=(z_{ti})_{1\le i\le p}=X_t + \ii Y_t$
is referred to as the complex normal distribution
$CN_p(\mu_t,\Sigma)$
with mean $\mu_t=(\mu_{ti})_{1\le i\le p}=\xi_t + \ii \eta_t$
and covariance matrix
$\Sigma = (\sigma_{ij})_{1\le i,j\le p} = 2(A+\ii B)$.
Actually, $\Sigma$ is a ``covariance'' in the sense of
$$ \sigma_{ij} = E[(z_{ti}-\mu_{ti})(\overline{z_{tj}-\mu_{tj}})]. $$
Here, the overline denotes the complex conjugate.
From the complex random vectors $Z_t$,
we define the complex noncentral Wishart distribution $CW_p(\nu,\Sigma,\Delta)$
as the distribution of a $p\times p$ Hermitian random matrix
\begin{equation}
\label{wishart-complex}
 W = (w_{ij})_{1\le i,j\le p},
 \quad w_{ij} = \sum_{t=1}^\nu z_{ti} \overline{z_{tj}},
\end{equation}
where
$$ \Delta = (\delta_{ij})_{1\le i,j\le p}, \quad
 \delta_{ij} = \sum_{t=1}^\nu \mu_{ti} \overline{\mu_{tj}} $$
is the mean square parameter matrix.  As in the real case,
the distribution of $W$ depends on $\mu_t$'s through $\Delta$
since its moment generating function is
\begin{equation}
\label{mgf-complex}
 E\bigl[e^{\tr(\Theta W)}\bigr] = \det(I-\Theta\Sigma)^{-\nu}
 e^{\tr(I-\Theta\Sigma)^{-1}\Theta\Delta},
\end{equation}
where $\Theta$ is a $p\times p$ Hermitian parameter matrix.
See \cite{Goodman1963} for the central case.

The primary purpose of this paper is to obtain expressions for the moments
$E[w_{ab}w_{cd}\cdots w_{ef}]$
of the real and complex noncentral Wishart distributions in terms of graphs,
 where
$a,b,c,d,\ldots,e,f\in\{1,\ldots,p\}$ are arbitrary indices.

Considering the cases where the mean vectors $\mu_t$ and the covariance
matrix $\Sigma$ take particular values, we will obtain several identities
of moments of some distributions associated with the Wishart distributions.
We shall see that the derivations are reduced to enumerating graphs of
various types.  This is the secondary purpose of our paper.

The Wishart distribution originates with a paper by \cite{Wishart1928}
around 80 years ago.
Since then, it is considered to be a fundamental distribution not only
in mathematical statistics but also in other fields such as
random matrices theory and signal processing
 (e.g., 
\cite{Bai1999}, \cite{Maiwald-Kraus2000}).
Despite this, the structure of moments of the Wishart distributions is
still an active research topic.  In the central case,
\cite{Lu-Richards2001}, \cite{Graczyk-etal2003}, and \cite{Graczyk-etal2005}
provided formulas for moments of the real and complex Wishart distributions.
These studies are based on expansions of the moment generating functions
of the central Wishart distributions
$\det(I-2\Theta\Sigma)^{-\frac{\nu}{2}}$.
The graph presentations of moments have also been discussed in these studies.
Prior to these studies, it was known that terms in the expansion of
$\det(I-Y)^{-\alpha}$ around $Y=0$ have some combinatorial structures
(e.g., \cite{Vere-Jones1988}),
which are closely related to the problem of Wishart moment.
More recently, \cite{Letac-Massam2008} provided a method to calculate moments
of the noncentral Wishart distribution by combining expansions of
the moment generating function $\det(I-2\Theta\Sigma)^{-\frac{\nu}{2}}$
and the ``noncentral part''
$e^{\tr(I-2\Theta\Sigma)^{-1}\Theta\Delta}$
 in (\ref{mgf-real}). 

The outline of this paper is as follows.
In Section \ref{section:real}, we treat the real noncentral Wishart matrices.
Formulas for the general terms of moments of the Wishart distributions
are given in terms of undirected graphs.  This is an extension of
\cite{Takemura1991}, who treated the central case.  Then,
by letting the mean vectors $\mu_t$ and the covariance matrix $\Sigma$
have particular kinds of structures, we obtain explicit formulas for moments of
the noncentral chi-square distribution,
\cite{Kibble1941}'s bivariate chi-square (gamma) distribution,
and the $2\times 2$ central Wishart distribution with $\Sigma=I$.
Noting a formal correspondence between the moments of the noncentral
chi-square distributions and the Laguerre polynomials, we will show that
the coefficients of the Laguerre polynomials have a combinatorial
interpretation.

In Section \ref{section:complex}, we treat the case of
the noncentral complex Wishart matrices.
Major parts of discussions are parallel to the real case.
One remarkable difference is that moments in the complex case are not
described in terms of undirected graphs but directed graphs.  


\section{Moments of the real noncentral Wishart distribution}
\label{section:real}

\subsection{A graph presentation}

In this subsection, we provide a graph presentation formula for moments of 
the real noncentral Wishart distributions of general degrees.  Our results
are generalizations of Theorem 4.3 of \cite{Takemura1991}
 where the central real Wishart matrices are treated.
Our basic tool is the following formula for moments of Gaussian random
vectors.  This is just a moment-cumulant relation in the Gaussian case.
For the proof, see \cite{McCullagh1987}.
In the central case $\mu=0$, this is sometimes referred to as the Wick formula.

\begin{lemma}[Moment of the real normal distribution]
\label{lemma:wick-real}
Let $X=(x_i)$ be a Gaussian random vector with mean $\mu=(\mu_i)$,
and covariance matrix $\Sigma=(\sigma_{ij})$.  Then,
$$
E[x_1 x_2\cdots x_n]
 = \sum \sigma_{i_1 i_2} \cdots \sigma_{i_{2m-1} i_{2m}}
        \mu_{i_{2m+1}} \cdots \mu_{i_n},
$$
where the summation is taken over all partitions of
$n$ indices $\{1,2,\ldots,n\}$ into unordered $m$ pairs and $n-2m$ singletons
$$ (i_1,i_2),\ldots,(i_{2m-1},i_{2m}),(i_{2m+1}),\ldots,(i_n). $$
\end{lemma}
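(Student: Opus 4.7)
The plan is to reduce to the centered (zero-mean) case, for which the classical Wick/Isserlis formula is standard. Write $x_k = \mu_k + y_k$ for each $k \in \{1, \ldots, n\}$, where $Y = (y_k)$ is a centered Gaussian vector with the same covariance $\Sigma$ as $X$. Expanding the product and exploiting linearity of expectation, together with the fact that the deterministic factors $\mu_k$ pull out, one obtains
\[
E[x_1 \cdots x_n] = \sum_{S \subseteq \{1,\ldots,n\}} \Bigl(\prod_{k \notin S} \mu_k\Bigr) \, E\Bigl[\prod_{k \in S} y_k\Bigr].
\]

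For the centered factor I would invoke the Wick formula in its standard form, $E[\prod_{k \in S} y_k] = \sum_\pi \prod_{(a,b) \in \pi} \sigma_{ab}$, where $\pi$ ranges over perfect matchings of $S$ (and the sum is empty, hence zero, when $|S|$ is odd). Substituting into the previous display gives
\[
E[x_1 \cdots x_n] = \sum_{S} \sum_{\pi} \Bigl(\prod_{k \notin S} \mu_k\Bigr) \prod_{(a,b) \in \pi} \sigma_{ab}.
\]

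The final step is purely combinatorial: a datum $(S, \pi)$ consisting of an even-sized subset $S \subseteq \{1,\ldots,n\}$ together with a perfect matching $\pi$ of $S$ is exactly the same thing as an unordered partition of $\{1, \ldots, n\}$ into pairs (the edges of $\pi$) and singletons (the elements of $\{1,\ldots,n\}\setminus S$). Under this bijection the right-hand side coincides with the sum in the statement, completing the proof.

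The only genuine input is the centered Wick formula, which I would establish by induction on $|S|$ via the moment generating function $E[\exp(t^{\top} Y)] = \exp(\tfrac{1}{2} t^{\top}\Sigma t)$: differentiating once in $t_k$ brings down a factor $(\Sigma t)_k$, which upon further differentiation pairs $k$ with one of the remaining indices and leaves a smaller problem of the same form. Equivalently, one could bypass the decomposition altogether by appealing to the general moment--cumulant relation $E[x_1\cdots x_n] = \sum_{\pi}\prod_{B\in\pi}\kappa_B$, noting that for a Gaussian vector all cumulants of order $\geq 3$ vanish while $\kappa_{\{i\}} = \mu_i$ and $\kappa_{\{i,j\}} = \sigma_{ij}$; only set partitions into pairs and singletons then survive, which is exactly the claim. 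No step presents a real obstacle, but the bijection between $(S,\pi)$ and pair-singleton partitions should be stated explicitly so that no ordering or symmetry factor is lost.
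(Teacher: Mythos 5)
Your argument is correct. Note, however, that the paper does not prove this lemma at all: it simply observes that the statement is the moment--cumulant relation specialized to the Gaussian case (where all cumulants of order $\ge 3$ vanish, $\kappa_{\{i\}}=\mu_i$, and $\kappa_{\{i,j\}}=\sigma_{ij}$, so that only set partitions into pairs and singletons contribute) and refers to McCullagh (1987) for details. That is precisely the second route you sketch at the end of your proposal. Your primary route---centering via $x_k=\mu_k+y_k$, expanding over subsets $S$, applying the classical Isserlis/Wick formula to the centered factor, and then identifying a pair $(S,\pi)$ with a partition of $\{1,\dots,n\}$ into pairs and singletons---is a genuinely more elementary and self-contained derivation: it needs only the centered Wick formula (which you correctly propose to obtain from the moment generating function $\exp(\tfrac12 t^{\top}\Sigma t)$ by induction) rather than the general moment--cumulant machinery. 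The trade-off is that the cumulant argument is a one-line specialization once that machinery is available, whereas your expansion requires the explicit bijection you rightly insist on stating, to confirm that no ordering or multiplicity factors are introduced. Both are complete proofs; yours buys self-containedness at the cost of a little bookkeeping.
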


\begin{remark}
Although Lemma \ref{lemma:wick-real} just states an expression for
$E[x_1 x_2\cdots x_n]$, it indeed gives general forms of the moments
$E[x_a x_b\cdots x_c]$ by considering a degenerate case.
For example, we have
$E[x_1 x_2^2]=E[\widetilde x_1 \widetilde x_2 \widetilde x_3]$,
where
$$
 \begin{pmatrix} \widetilde x_1 \\ \widetilde x_2 \\ \widetilde x_3
 \end{pmatrix} \sim
 N_3\left(\begin{pmatrix} \mu_1 \\ \mu_2 \\ \mu_2 \end{pmatrix},
 \begin{pmatrix}
 \sigma_{11} &  \sigma_{12} &  \sigma_{12} \\
 \sigma_{21} &  \sigma_{22} &  \sigma_{22} \\
 \sigma_{21} &  \sigma_{22} &  \sigma_{22} \end{pmatrix}\right).
$$
Throughout the paper, we will use this degeneracy argument many times.
\end{remark}

Let $X_t =(x_{ti})$ ($t=1,\ldots,\nu$) be independent Gaussian random
vectors with mean $\mu_t$ and covariance matrix $\Sigma$.
Let $W=(w_{ij})$ be a Wishart matrix made of $X_t$'s as in (\ref{wishart-real}).
In the following, we give a formula for the moment
$E[w_{ab} w_{cd}\cdots w_{ef}]$ with $a,b,c,d,\ldots,e,f$ arbitrary indices.
By applying the degeneracy argument again, we can restrict our attention to
the moment $E[w_{12} w_{34}\cdots w_{2n-1,2n}]$ without loss of generality.
For example
$E[w_{11} w_{12}^2]=E[\widetilde w_{12} \widetilde w_{34} \widetilde w_{56}]$,
where
$(\widetilde w_{ij})\sim
 W_6(\nu,(\widetilde\sigma_{ij}),(\widetilde\delta_{ij}))$,
$$ (\widetilde\sigma_{ij},\widetilde\delta_{ij}) = \begin{cases}
 (\sigma_{11},\delta_{11}), & i,j\in\{1,2,3,5\}, \\
 (\sigma_{12},\delta_{12}), & i\in\{1,2,3,5\},\,j\in\{4,6\}, \\
 (\sigma_{21},\delta_{21}), & i\in\{4,6\},\,j\in\{1,2,3,5\}, \\
 (\sigma_{22},\delta_{22}), & i,j\in\{4,6\}. \end{cases}
$$ 

Let $V = \{1,2,\ldots,2n-1,2n\}$ be the set of indices
appearing in the argument of the expectation $E[w_{12} \cdots w_{2n-1,2n}]$.
In the following, we consider an undirected graph whose vertices are
the elements of $V$.
First consider an undirected graph $G_0=(V,E_0)$ with the edges
$$ E_0 = \{(1,2),\ldots,(2n-1,2n)\}. $$
For each partition of $\{1,2,\ldots,2n-1,2n\}$
into $m$ pairs and $2n-2m$ singletons,
\begin{equation}
\label{i}
 (i_1,i_2),\ldots,(i_{2m-1},i_{2m}),(i_{2m+1}),\ldots,(i_{2n}),
\end{equation}
we define a set of undirected edges
$$ E = \{(i_1,i_2),\ldots,(i_{2m-1},i_{2m})\}. $$
By adding the edges of $E$ to $G_0$, we have a graph
\begin{equation}
\label{G}
G=(V,E_0\cup E).
\end{equation}


Each connected component of $G$ is classified as
 a ``cycle'' (a path without terminals) and
 a ``chain'' (a path with two terminals).
For the partition (\ref{i}), the number of chains is $n-m$.
The number of cycles of $G$ is denoted by $\len(G)$.
Note that $\len(G)\le m$.
Let $(j_1,j_2),\ldots,(j_{2n-2m-1},j_{2n-2m})$ be pairs of
two terminal vertices of $n-m$ chains of $G$, and let
$$ \check E = \{ (j_1,j_2),\ldots,(j_{2n-2m-1},j_{2n-2m}) \}. $$
Using these notations, the general form for the moments is given below.

\begin{theorem}[Moment of the real noncentral Wishart distribution]
\label{theorem:moment-real}
Let $(w_{ij})\sim W(\nu,(\sigma_{ij}),(\delta_{ij}))$.  Then,
\begin{equation}
\label{moment-real}
E[w_{12}\cdots w_{2n-1,2n}]
= \sum_E \nu^{\len(G)} \sigma^E \delta^{\check E},
\end{equation}
where
\begin{align*}
& \sigma^E = \prod_{(i,i')\in E}\sigma_{ii'}
 = \sigma_{i_1 i_2}\cdots\sigma_{i_{2m-1} i_{2m}}, \\
& \delta^{\check E} = \prod_{(j,j')\in \check E}\delta_{jj'}
 = \delta_{j_1 j_2} \cdots \delta_{j_{2n-2m-1} j_{2n-2m}}.
\end{align*}
The summation $\sum_E$ is taken over all partitions of
$\{1,2,\ldots,2n\}$ of the form (\ref{i}).
\end{theorem}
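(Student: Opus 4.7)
The plan is to expand each Wishart entry as $w_{2k-1,2k}=\sum_{t_k=1}^{\nu}x_{t_k,2k-1}x_{t_k,2k}$, pull the $t$-sums outside the expectation, and compute each inner expectation via Lemma~\ref{lemma:wick-real}. This gives
\[
E[w_{12}\cdots w_{2n-1,2n}]=\sum_{t_1,\ldots,t_n=1}^{\nu}E\Bigl[\prod_{k=1}^{n}x_{t_k,2k-1}x_{t_k,2k}\Bigr].
\]
Since $X_1,\ldots,X_\nu$ are independent Gaussian, the joint covariance among the $2n$ random variables $x_{t_k,i}$ is block diagonal in $t$: $\Cov(x_{t_k,i},x_{t_l,j})=\sigma_{ij}$ when $t_k=t_l$ and zero otherwise, while each marginal mean is $\mu_{t_k,i}$.

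I would then apply Lemma~\ref{lemma:wick-real} to each inner expectation, indexing the expansion over partitions $\pi$ of $V=\{1,\ldots,2n\}$ into $m$ pairs $E(\pi)$ and $2n-2m$ singletons $S(\pi)$. A pair $(i,i')\in E(\pi)$ contributes $\sigma_{ii'}$ provided $t_{\lceil i/2\rceil}=t_{\lceil i'/2\rceil}$ (otherwise the term vanishes by independence), while each singleton $j\in S(\pi)$ contributes $\mu_{t_{\lceil j/2\rceil},j}$. Interchanging the order of summation yields
\[
E[w_{12}\cdots w_{2n-1,2n}]=\sum_{\pi}\sigma^{E(\pi)}\sum_{(t_1,\ldots,t_n)}\prod_{j\in S(\pi)}\mu_{t_{\lceil j/2\rceil},j},
\]
where the inner sum is restricted to tuples $(t_1,\ldots,t_n)\in\{1,\ldots,\nu\}^{n}$ that satisfy $t_{\lceil i/2\rceil}=t_{\lceil i'/2\rceil}$ for every $(i,i')\in E(\pi)$.

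The final step analyzes this constrained $t$-sum using the graph $G=(V,E_0\cup E(\pi))$ from~(\ref{G}). Because every vertex has $G$-degree at most two (one $E_0$-edge and at most one pair-edge from $\pi$), each connected component is either a cycle or a chain, and a chain terminal has degree one in $G$, which means it belongs to no pair-edge and is therefore a singleton of $\pi$; conversely, every singleton becomes a chain terminal. The equality constraints imposed by the pair-edges, together with the automatic identification of the two endpoints of any $E_0$-edge with a common $t_k$, force $t_{\lceil\cdot/2\rceil}$ to be constant on each component. Hence the $t$-sum factorizes across components: a cycle component (no singletons) contributes a free factor of $\nu$, while a chain with terminals $(j,j')$ contributes $\sum_{t=1}^{\nu}\mu_{t,j}\mu_{t,j'}=\delta_{jj'}$. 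Multiplying across components yields $\nu^{\len(G)}\delta^{\check E}$, which proves the identity after identifying $\pi$ with its pair-set $E$.

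The main obstacle lies in the bookkeeping of this last step: verifying (i) that the components of $G$ are exclusively cycles or chains rather than more complicated subgraphs, (ii) that the set of chain terminals coincides with the partition singletons so that $\mu$-factors attach to exactly the right endpoints, and (iii) that the constraints leave precisely one free $t$-value per component. All three points follow from the degree-$\le 2$ structure of $G$ combined with the natural bijection between $E_0$-edges and the multi-indices $t_k$.
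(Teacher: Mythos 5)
Your proposal is correct and follows essentially the same route as the paper's proof: expand each $w_{2k-1,2k}$ over the sample index, apply Lemma~\ref{lemma:wick-real} to the resulting jointly Gaussian collection, and then factor the constrained $t$-sum over the connected components of $G$, with cycles yielding a free factor of $\nu$ and chains yielding $\sum_{t}\mu_{tj}\mu_{tj'}=\delta_{jj'}$. The paper handles your bookkeeping points (i)--(iii) in the same way, via the indicator $\Cov(x_{si},x_{tj})=1_{\{s=t\}}\sigma_{ij}$ and an explicit reindexing of the $t$'s along each component.
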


\begin{example}
Consider the evaluation of the moment $E[w_{12}w_{34}w_{56}]$.
Then, $V=\{1,2,3,4,5,6\}$ and $E_0=\{(1,2),(3,4),(5,6)\}$.
There are 76 partitions of $V$ into pairs and singletons.
Figure \ref{figure:G} is the graph
$G=(V,E_0\cup E)$ for $E=\{(1,6),(2,5)\}$ ($\check E=\{(3,4)\}$).
Summing up 76 possibilities, we have the following:
\begin{align*}
E[w_{12} w_{34} w_{56}]
=& \nu^3 \sigma_{12} \sigma_{34} \sigma_{56}
 + \nu^2 \sigma_{23} \sigma_{14} \sigma_{56} [6]
 + \nu \sigma_{23} \sigma_{45} \sigma_{16} [8] \\
&+ \nu^2 \sigma_{12} \sigma_{34} \delta_{56} [3]
 + \nu \sigma_{23} \sigma_{14} \delta_{56} [6]
 + \nu \sigma_{12} \sigma_{45} \delta_{36} [12] \\
& \qquad
 + \sigma_{23} \sigma_{45} \delta_{16} [24] \\
&+ \nu \sigma_{12} \delta_{34} \delta_{56} [3]
 + \sigma_{23} \delta_{14} \delta_{56} [12] \\
&+ \delta_{12} \delta_{34} \delta_{56}.
\end{align*}
Here, $[n]$ means that there are $n$ terms of similar form.

\begin{figure*}
\begin{center}
\scalebox{0.6}{\includegraphics{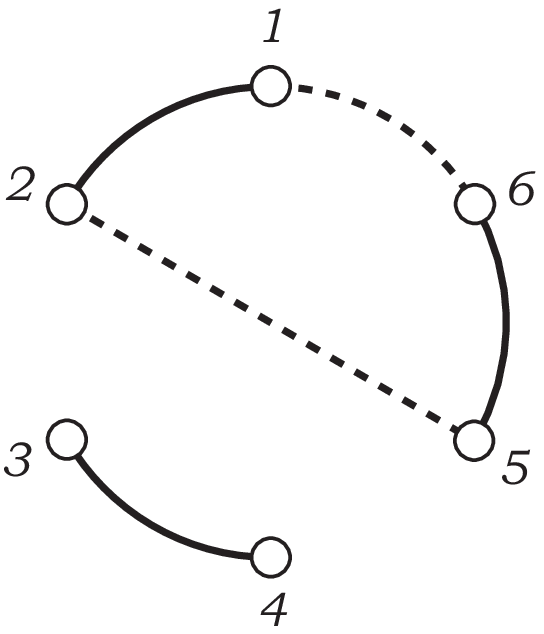}} 
\caption{Graph $G=(V,E_0\cup E)$ ($E_0$: solid line, $E$: dotted line)
presenting the term $\nu^1 \sigma_{16} \sigma_{25} \delta_{34}$
($n=6$, $m=2$, $\len(G)=1$).}
\label{figure:G}
\end{center}
\end{figure*}
\end{example}

\begin{proof}
For $i=1,\ldots,n$, let $e(i)=[(i+1)/2]$ (the integer part of $(i+1)/2$).
Noting that $w_{ij}=\sum_{t=1}^\nu x_{ti} x_{tj}$, and
from Lemma \ref{lemma:wick-real}, we have
\begin{align}
E[w_{12} & \cdots w_{2n-1,2n}] \nonumber \\
=& \sum_{t_1=1}^\nu\cdots\sum_{t_n=1}^\nu
 E[x_{t_1,1} x_{t_1,2}\cdots x_{t_n,2n-1} x_{t_n,2n}] \nonumber \\
=& \sum_{t_1}\cdots\sum_{t_n}
 E[x_{t_{e(1)},1} x_{t_{e(2)},2} \cdots x_{t_{e(2n-1)},2n-1} x_{t_{e(2n)},2n}]
 \nonumber \\
=& \sum_E \sum_{t_1}\cdots\sum_{t_n}
 \Cov(x_{t_{e(i_1)},i_1},x_{t_{e(i_2)},i_2}) \cdots
 \Cov(x_{t_{e(i_{2m-1})},i_{2m-1}},x_{t_{e(i_{2m})},i_{2m}}) \nonumber \\
& \times
 E[x_{t_{e(i_{2m+1})},i_{2m+1}}] \cdots
 E[x_{t_{e(i_{2n})},i_{2n}}].
\label{sum_E}
\end{align}
Since $\{i_1,\ldots,i_n\}=V$, the indices $i_1,\ldots,i_n$ can be
divided into connected components of the graph $G$.
Let $\{j_1,\ldots,j_{2k}\}$ be a set of vertices of a connected component.
Then, as we already pointed out, it forms either a chain
$$ (j_1,j_2),(j_2,j_3),\ldots,(j_{2k-2},j_{2k-1}),(j_{2k-1},j_{2k}), $$
or a cycle
$$ (j_1,j_2),(j_2,j_3),\ldots,(j_{2k-1},j_{2k}),(j_{2k},j_{1}), $$
and in both cases
$$ (j_1,j_2),(j_3,j_4),\ldots,(j_{2k-1},j_{2k})\in E_0. $$
Since the running indices $t_1,\ldots,t_n$ correspond to
 $n$ edges of $E_0$, and
$e(j_1)=e(j_2),e(j_3)=e(j_4),\ldots,e(j_{2k-1})=e(j_{2k})$,
the argument of the summation $\sum_E$ in (\ref{sum_E}) is
written as a product of terms of the form
\begin{align}
\sum_{t_{1}} & \cdots\sum_{t_{k}}
 E[x_{t_{1},j_1}]
 \Cov(x_{t_{1},j_2},x_{t_{2},j_3}) \cdots \nonumber \\
& \times
 \Cov(x_{t_{k-1},j_{2k-2}},x_{t_{k},j_{2k-1}})
 E[x_{t_{k},j_{2k}}]
\label{chain-real}
\end{align}
in the chain case, or
\begin{align}
\sum_{t_{1}} & \cdots\sum_{t_{k}}
 \Cov(x_{t_{1},j_2},x_{t_{2},j_3}) \cdots \nonumber \\
& \times
 \Cov(x_{t_{{k-1}},j_{2k-2}},x_{t_{{k}},j_{2k-1}})
 \Cov(x_{t_{{k}},j_{2k}},x_{t_{1},j_1})
\label{cycle-real}
\end{align}
in the cycle case.
Here, we used a reindexing
$$ t_1:=t_{e(j_1)}=t_{e(j_2)},\ldots,t_k:=t_{e(j_{2k-1})}=t_{e(j_{2k})}.
$$ 
Noting that $\Cov(x_{si},x_{tj})=1_{\{s=t\}} \sigma_{ij}$ and
$\sum_{t=1}^\nu E[x_{ti}] E[x_{tj}]=\delta_{ij}$, we see that
$(\ref{chain-real}) =
 \sigma_{j_2 j_3}\cdots\sigma_{j_{2k-2} j_{2k-1}}\delta_{j_{2k} j_1}$
and
$(\ref{cycle-real}) = \nu
 \sigma_{j_2 j_3}\cdots\sigma_{j_{2k-2},j_{2k-1}}\sigma_{j_{2k} j_1}$.
This completes the proof.
\QED
\end{proof}

\subsection{Enumeration of undirected graphs}

In this subsection, we will enumerate the graphs $G=(V,E_0\cup E)$
defined in (\ref{G})
under the condition that the number $l=\len(G)$ of cycles and
 the number $m$ of edges of $E$ are given.
Let $f_{l,m,n}$ be the number of such graphs.

Consider a degenerate noncentral Wishart matrix $W=(w_{ij})$ such that
$\sigma_{ij}\equiv 1$ and $\delta_{ij}\equiv\delta$.
This happens when every component of $X_t$ making up $W$
takes the same value with probability one.  Accordingly,
all elements of $W$ take the same value $w$, say, 
with probability one.
In this setting, (\ref{moment-real}) in Theorem \ref{theorem:moment-real}
is reduced to a moment formula for the distribution of $w$,
the noncentral chi-square distribution $\chi^2_\nu(\delta)$
with $\nu$ degrees of freedom and the noncentrality parameter $\delta$.
Using the coefficient $f_{l,m,n}$, the $n$th moment of $w$ is given as follows.
\begin{equation}
\label{Ewn}
 E[w^n] = \sum_{m=0}^n \sum_{l\ge 0} \nu^l f_{l,m,n} \delta^{n-m}.
\end{equation}

The coefficient $f_{l,m,n}$ satisfies the following recurrence formula.

\begin{lemma}
\label{lamma:recurrence-f}
\begin{equation}
\label{recurrence-f}
 f_{l,m,n} = 2(2n-m-1) f_{l,m-1,n-1} + f_{l-1,m-1,n-1} + f_{l,m,n-1},
\end{equation}
with boundary conditions
\begin{equation}
\label{boundary-f-1}
 f_{l,0,n} = \begin{cases} 1 & (l=0), \\ 0 & (l\ge 1) \end{cases}
 \quad \text{for $n\ge 1$},
\end{equation}
and
\begin{equation}
\label{boundary-f-2}
 f_{l,1,1} = \begin{cases} 0 & (l=0), \\ 1 & (l=1). \end{cases}
\end{equation}
\end{lemma}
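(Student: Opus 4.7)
The plan is to establish the recurrence by a combinatorial bijection obtained by analyzing the status of the last two vertices $\{2n-1,2n\}$ inside $G=(V,E_0\cup E)$. Since every vertex has $E_0$-degree $1$ and $E$-degree $0$ or $1$, the graph $G$ is a disjoint union of chains and cycles, and the singletons of the partition are exactly the chain terminals. The reduction I will use deletes the vertices $\{2n-1,2n\}$, the $E_0$-edge $(2n-1,2n)$, and the $E$-edges incident to these two vertices, producing a graph $G'$ on $V'=\{1,\ldots,2n-2\}$ with $E'_0=\{(1,2),\ldots,(2n-3,2n-2)\}$.

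I will distinguish three cases according to the role of $\{2n-1,2n\}$ in the partition. In Case (A), both $2n-1$ and $2n$ are singletons; then $(2n-1,2n)$ is an isolated chain, the reduction preserves both $m$ and $l$, and the number of such $G$ is $f_{l,m,n-1}$. In Case (B), $(2n-1,2n)\in E$; together with the $E_0$-edge the pair forms an isolated $2$-cycle, the reduction decreases both $m$ and $l$ by $1$, and the number of such $G$ is $f_{l-1,m-1,n-1}$. In Case (C), at least one of $\{2n-1,2n\}$ is paired in $E$ with some vertex of $V'$; here the reduction is supplemented by a smoothing step: if only one vertex (say $2n-1$) is paired with $j\in V'$, then $j$ becomes a new singleton in $G'$; if both are paired with distinct $j_1,j_2\in V'$, then the two removed $E$-edges are replaced by a single new $E$-edge $(j_1,j_2)$ in $E'$. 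In every sub-case $G'$ has $m-1$ edges of $E'$ and the same $l$ cycles, because chains shorten to chains and cycles shorten to cycles under the smoothing.

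To obtain the prefactor $2(2n-m-1)$ I count the preimages of a fixed $G'$ in Case (C) via the inverse insertion. The super-vertex $\{2n-1,2n\}$ can be attached to $G'$ either (i) by pairing one of $\{2n-1,2n\}$ with one of the $2(n-1)-2(m-1)=2n-2m$ singletons of $G'$, with $2$ choices for which of $\{2n-1,2n\}$ is used, giving $2(2n-2m)$ possibilities, or (ii) by splitting one of the $m-1$ existing $E'$-edges into a path through $\{2n-1,2n\}$ in one of its two orientations, giving $2(m-1)$ possibilities; summing yields $2(2n-m-1)$. Hence Case (C) contributes $2(2n-m-1)\,f_{l,m-1,n-1}$, and adding the three cases gives (\ref{recurrence-f}). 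The boundary conditions are direct: when $m=0$ we have $E=\emptyset$ and $G=G_0$ is a disjoint union of $n$ two-vertex chains with no cycles, yielding (\ref{boundary-f-1}); when $n=m=1$ the unique choice $E=\{(1,2)\}$ produces a single $2$-cycle, yielding (\ref{boundary-f-2}).

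I expect the main obstacle to be the careful justification of Case (C): one must verify that the smoothing produces a well-defined $G'$ with $m-1$ edges in both sub-cases, that $l$ is preserved under the insertion of $\{2n-1,2n\}$ into any chain or cycle of $G'$, and that the two orientations in the splitting of an unordered $E'$-edge give genuinely distinct graphs $G$ (they do, because $2n-1\neq 2n$), so that the multiplicity $2(m-1)$ is exact and not subject to double-counting.
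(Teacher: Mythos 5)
Your proof is correct and follows essentially the same route as the paper: delete the vertices $2n-1,2n$ and classify by whether the pair forms an isolated chain, an isolated $2$-cycle, or sits inside a longer component, then count re-insertions. Your count $2\bigl[(m-1)+2(n-m)\bigr]=2(2n-m-1)$ merely regroups the paper's $2\bigl[(n-1)+(n-m)\bigr]$, and your explicit smoothing step in Case (C) is in fact a slightly more careful rendering of the paper's reduction.
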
 

\begin{proof}
In the following, we sometimes
refer to an edge from $E_0$ as a ``solid line'' edge and
an edge from $E$ as a ``dashed line'' edge
as shown in Figure \ref{figure:G}.

The connected components of $G$ are classified as cycles and chains.
In each cycle, the number of solid line edges is equal to the number of
dashed line edges.  In each chain,
the number of solid line edges is one more than the number of
 dashed line edges.  
Thus, the number of connected chains is  $n-m$,
the difference between the number of solid line edges and
the number of dashed line edges.

Consider a graph $G'$ made by removing two vertices $2n-1$ and $2n$,
and all (solid line and dashed line) edges connecting to these two vertices.
Note that the (solid line) edge $(2n-1,2n)$ is deleted. 

One of the following will meet:  The edge $(2n-1,2n)$ is contained in 
(i) a cycle with 4 or more edges, or a chain with 3 or more edges
(the edges $(a,b)$, $(c,d)$, $(i,j)$, or $(k,l)$ in Figure \ref{figure:G-12});
(ii) a cycle with 2 edges
($(e,f)$ in Figure \ref{figure:G-12});
(iii) a chain consisting of a edge
($(g,h)$ in Figure \ref{figure:G-12}).

Case (i).
In the graph $G'$ made by removing two vertices $2n-1$ and $2n$, and all
connected edges, there are $n-1$ solid line edges. Since one dashed line edge
is removed together, there are $m-1$ dashed line edges.
The number of chains still remains $(n-1)-(m-1)=n-m$.
To the graph $G'$, consider adding the edge $(2n-1,2n)$ again.
There are $n-1+(n-m)=2n-m-1$ places where $(2n-1,2n)$ can be
inserted, and considering the direction of the inserted edge,
there are $2(2n-m-1)$ ways in which the insertion can be done.
By this operation, the number of dashed lines
increases by 1, whereas the number of cycles is invariant.
The contribution of the number of graphs made by this operation
to $f_{l,m,n}$ is
$$ 2(2n-m-1) f_{l,m-1,n-1}. $$

Case (ii).
Consider adding the edge $(2n-1,2n)$ to the graph $G'$ again
to make a cycle with the edge $(2n-1,2n)$ and a dashed line edge.
By this operation, both the number of dashed lines and
the number of cycles increases by 1.
The contribution of the number of graphs made by this operation
to $f_{l,m,n}$ is
$$ f_{l-1,m-1,n-1}. $$

Case (iii).
Consider adding the edge $(2n-1,2n)$ to the graph $G'$ again
to make a chain consisting of one edge $(2n-1,2n)$.
By this operation, both the number of dashed lines and
the number of cycles are invariant.
The contribution of the number of graphs made by this operation
to $f_{l,m,n}$ is
$$ f_{l,m,n-1}. $$

Summing up the three cases (i), (ii), and (iii), we obtain the recurrence
formula (\ref{recurrence-f}).
\QED
\end{proof}

\begin{theorem}
\label{theorem:generating-function}
The generating function of $f_{l,m,n}$ with respect to the number $l$ of cycles
is given by
\begin{align}
\Phi_{m,n}(\nu)
& = \sum_{l\ge 0} \nu^l f_{l,m,n} \nonumber \\
& = {n \choose m} \prod_{i=1}^m (\nu +2(n-i))
\quad (0\le m\le n,\,n\ge 1).
\label{solution-phi}
\end{align}
Here, we use a convention $\prod_{i=1}^0=1$.
\end{theorem}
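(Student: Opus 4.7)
The plan is to prove the formula by induction on $n$, using a generating-function version of the recurrence in Lemma \ref{lamma:recurrence-f}. First I would multiply the recurrence
\[
 f_{l,m,n} = 2(2n-m-1) f_{l,m-1,n-1} + f_{l-1,m-1,n-1} + f_{l,m,n-1}
\]
by $\nu^l$ and sum over $l\ge 0$, turning the middle term into a factor $\nu$ via reindexing $l-1\mapsto l$. This produces the one-variable recurrence
\[
 \Phi_{m,n}(\nu) = \bigl(\nu + 2(2n-m-1)\bigr)\,\Phi_{m-1,n-1}(\nu) + \Phi_{m,n-1}(\nu).
\]
The boundary conditions (\ref{boundary-f-1}) and (\ref{boundary-f-2}) translate into $\Phi_{0,n}(\nu)=1$ for $n\ge 1$ and $\Phi_{1,1}(\nu)=\nu$, matching the proposed formula (\ref{solution-phi}) at $m=0$ and at $(m,n)=(1,1)$.

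Next I would run an induction on $n$. Fix $n\ge 2$, assume the closed form holds for $n-1$ and all valid $m$, and substitute into the recurrence above. Both summands share the common factor $\prod_{i=2}^{m}(\nu+2(n-i))$: in the first term this comes from $\prod_{i=1}^{m-1}(\nu+2(n-1-i))$, and in the second term it comes from factoring $(\nu+2(n-m-1))$ out of $\prod_{i=1}^{m}(\nu+2(n-1-i))$. After pulling this product out, what remains to verify is the polynomial identity
\[
 \bigl(\nu+2(2n-m-1)\bigr)\binom{n-1}{m-1} + \bigl(\nu+2(n-m-1)\bigr)\binom{n-1}{m} = \binom{n}{m}\bigl(\nu+2(n-1)\bigr).
\]
The coefficient of $\nu$ matches by Pascal's rule, so everything reduces to the constant-term identity
\[
 (2n-m-1)\binom{n-1}{m-1} + (n-m-1)\binom{n-1}{m} = (n-1)\binom{n}{m},
\]
which is a short manipulation using $\binom{n-1}{m-1}=\tfrac{m}{n}\binom{n}{m}$ and $\binom{n-1}{m}=\tfrac{n-m}{n}\binom{n}{m}$.

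The only slightly delicate point will be boundary behavior inside the induction: when $m=n$ the term $\Phi_{m,n-1}(\nu)$ has $m$ exceeding the first index range, but this is consistent because $\binom{n-1}{n}=0$, so the closed form extends by zero and the induction step still goes through; similarly the case $m=1$ needs the convention $\prod_{i=1}^{0}=1$ to collapse the product correctly. With those corner cases checked, the inductive step closes and (\ref{solution-phi}) follows. I expect the polynomial identity above to be the only nontrivial computation, and it is mild; the main care is just in tracking the product indices so that the common factor is extracted correctly.
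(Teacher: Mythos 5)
Your proposal is correct and follows essentially the same route as the paper: pass to the generating function $\Phi_{m,n}$ via the recurrence of Lemma \ref{lamma:recurrence-f} (using $f_{-1,m,n}=0$), check the boundary values, and verify that the closed form satisfies the resulting recurrence, which reduces to Pascal's rule plus the constant-term identity $(2n-m-1)\binom{n-1}{m-1}+(n-m-1)\binom{n-1}{m}=(n-1)\binom{n}{m}$. The paper organizes the verification as computing $\Phi_{m,n}-\Phi_{m,n-1}$ rather than as an explicit induction on $n$, but the computation is the same and your handling of the $m=n$ corner via $\binom{n-1}{n}=0$ is a valid substitute for the paper's separate boundary condition $\Phi_{n,n}=\prod_{i=1}^{n}(\nu+2(n-i))$.
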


\begin{figure*}
\begin{center}
\scalebox{0.6}{\includegraphics{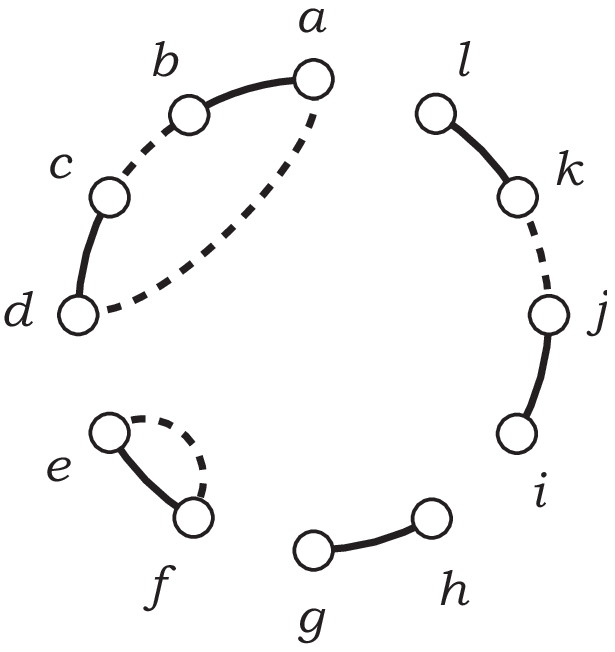}} 
\caption{A figure for the proof of Lemma \ref{lamma:recurrence-f}.}
\label{figure:G-12}
\end{center}
\end{figure*}

\begin{proof}
Noting that $f_{-1,m,n}=0$, the generating function
$\Phi_{m,n}(\nu) = \sum_{l\ge 0} \nu^l f_{l,m,n}$ has to satisfy
\begin{equation}
\label{recurrence-phi}
 \Phi_{m,n} = 2(2n-m-1) \Phi_{m-1,n-1} + \nu\Phi_{m-1,n-1} + \Phi_{m,n-1}.
\end{equation}

To solve the recurrence formula (\ref{recurrence-phi}),
 consider the boundary conditions.
From (\ref{boundary-f-1}), we have
\begin{equation}
\label{boundary1}
 \Phi_{0,n}(\nu) = 1 \quad (n\ge 1).
\end{equation}
Moreover, from (\ref{boundary-f-2}), we have
$$ \Phi_{1,1}(\nu) = \nu. $$
Furthermore, since $\Phi_{n,n-1}=0$,
\begin{align}
\Phi_{n,n}
 &= 2(n-1) \Phi_{n-1,n-1} + \nu\Phi_{n-1,n-1} 
 = (\nu + 2n-2) \Phi_{n-1,n-1} = \cdots \nonumber \\
 &= \prod_{i=1}^n (\nu +2(n-i)) \quad (n\ge 1).
\label{boundary2}
\end{align}
The recurrence formula (\ref{recurrence-phi})
 combined with the boundary conditions
 (\ref{boundary1}) and (\ref{boundary2}) determines $\Phi_{m,n}$
for all $m$ and $n$.

In the following, we see that $\Phi_{m,n}(\nu)$ in (\ref{solution-phi}) is
actually the solution for the recurrence formula (\ref{recurrence-phi}).
The boundary conditions (\ref{boundary1}) and (\ref{boundary2}) are satisfied.
We only have to make sure that (\ref{solution-phi}) really satisfies
 (\ref{recurrence-phi}).  Indeed, 
\begin{align*}
\Phi_{m,n} - \Phi_{m,n-1}
&=
 {n \choose m} \prod_{i=1}^m (\nu +2n-2i)
-{n-1 \choose m} \prod_{i=1}^m (\nu +2n-2-2i) \\
&=
 {n-1 \choose m-1}\frac{1}{m} \prod_{i=1}^{m-1} (\nu +2n-2-2i) \\
& \qquad \times
 \{ n (\nu +2n-2)-(n-m) (\nu +2n-2-2m) \} \\
&= 2(2n-m-1) \Phi_{m-1,n-1} + \nu\Phi_{m-1,n-1}.
\end{align*}
%
%
\QED
\end{proof}

\begin{corollary}
\begin{align*}
\Phi_{m,n}(1)
&= {n \choose m} (2n-1)(2n-3)\cdots (2n-2m+1) \\
&= {2n \choose 2m} (2m-1)!!
\end{align*}
is the number of undirected graphs $G$, and
\begin{align*}
\Phi_{m,n}(0)
&= {n \choose m} (2n-2)(2n-4)\cdots (2n-2m) \\
&= \frac{2^m \, n! (n-1)!}{m! \, (n-m)! (n-m-1)!}
\end{align*}
is the number of undirected graphs $G$ without cycles.
\end{corollary}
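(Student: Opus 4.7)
The plan is to derive both identities as direct consequences of Theorem~\ref{theorem:generating-function}. The closed form $\Phi_{m,n}(\nu) = \binom{n}{m}\prod_{i=1}^m(\nu + 2(n-i))$ specialises immediately at $\nu=1$ and $\nu=0$: substituting $\nu=1$ turns the factor $\nu + 2(n-i)$ into the odd number $2n-2i+1$, producing $\binom{n}{m}(2n-1)(2n-3)\cdots(2n-2m+1)$, while substituting $\nu=0$ turns it into the even number $2n-2i$, producing $\binom{n}{m}(2n-2)(2n-4)\cdots(2n-2m)$. This accounts for the first displayed product in each of the two identities.

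For the alternative closed forms, I would carry out a short factorial computation. For $\Phi_{m,n}(1)$, write the product of odd numbers as a ratio of double factorials, $(2n-1)(2n-3)\cdots(2n-2m+1) = (2n-1)!!/(2n-2m-1)!!$, and apply the standard identity $(2k)! = 2^k k!\,(2k-1)!!$ with $k = n$, $k = m$, and $k = n-m$ to rewrite $\binom{n}{m}(2n-1)!!/(2n-2m-1)!!$ in the form $\binom{2n}{2m}(2m-1)!!$. For $\Phi_{m,n}(0)$, factor $2^m$ out of the product of even numbers to obtain $2^m(n-1)(n-2)\cdots(n-m) = 2^m(n-1)!/(n-m-1)!$, and combine with $\binom{n}{m}$ to recover the stated fraction $2^m n!(n-1)!/(m!(n-m)!(n-m-1)!)$.

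The combinatorial interpretations are then immediate from the definition of $\Phi_{m,n}(\nu)$ as the polynomial $\sum_{l\ge 0}\nu^l f_{l,m,n}$, where $f_{l,m,n}$ counts the graphs $G = (V, E_0\cup E)$ of (\ref{G}) having $m$ dashed edges and $l$ cycles. Evaluating at $\nu=1$ sums over all $l$ and hence counts all such graphs; evaluating at $\nu=0$ picks off the $l=0$ coefficient and counts exactly the acyclic ones. I expect no substantive obstacle; the only step requiring any care is the double-factorial bookkeeping needed to convert the product of odd numbers into the $\binom{2n}{2m}(2m-1)!!$ form, and this is routine once the identity $(2k)! = 2^k k!\,(2k-1)!!$ is invoked.
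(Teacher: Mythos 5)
Your proposal is correct and follows exactly the route the paper intends: the corollary is an immediate specialisation of Theorem~\ref{theorem:generating-function} at $\nu=1$ and $\nu=0$, combined with the fact that $\Phi_{m,n}(\nu)=\sum_{l\ge 0}\nu^l f_{l,m,n}$ so that $\nu=1$ sums over all cycle counts and $\nu=0$ extracts the acyclic term, and the double-factorial manipulations you outline check out.
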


\begin{remark}
\label{remark:f-stirling}
Nonnegative integers $s_n(m,l)$ defined by a generating function
\begin{equation}
\label{mgf-stirling}
 \sum_{l=0}^m \nu^l s_n(m,l) = \prod_{i=1}^m (\nu+n-i)
\end{equation}
are called the noncentral Stirling numbers of the first kind
 (\cite{Koutras1982}).  Since
$$
\sum_{\nu\ge 0} \nu^l f_{l,m,n}
  = {n \choose m} 2^m \prod_{i=1}^m (\nu/2 +n-i) \\
  = {n \choose m} 2^m \sum_{l\ge 0} (\nu/2)^l s_n(m,l),
$$
we have
$$ f_{l,m,n} = {n \choose m} 2^{m-l} s_n(m,l). $$
\end{remark}

\subsection{Moments of the noncentral chi-square distribution and the Laguerre polynomial}

As stated in the beginning of the previous subsection,
the moment of the noncentral chi-square distribution is described
with the coefficient $f_{l,m,n}$.

In view of (\ref{Ewn}) and 
Theorem \ref{theorem:generating-function},
the $n$th moment of $w\sim\chi^2_\nu(\delta)$, 
the noncentral chi-square distribution
with $\nu$ degrees of freedom and the noncentrality parameter $\delta$,
is written as
\begin{align}
E[w^n]
&= \sum_{m=0}^n \sum_{l\ge 0} \nu^l f_{l,m,n} \delta^{n-m} \nonumber \\
&= \sum_{m=0}^n \Phi_{m,n}(\nu) \delta^{n-m} \nonumber \\
&= \sum_{m=0}^n {n \choose m} \prod_{i=1}^m (\nu +2(n-i)) \delta^{n-m}.
\label{moment-noncentralchisq}
\end{align}
This is a well-known expression
for the moment of noncentral chi-square distribution
 (e.g., \cite{Johnson-etal1995}).

\begin{remark}
\cite{Koutras1982} pointed out that moments of some noncentral distributions
are described with the noncentral Stirling numbers of the first kind.
\end{remark}

The moment generating function of the noncentral chi-square distribution
$\chi^2_\nu(\delta)$ is
$$ (1-2t)^{-\nu/2} e^{\delta t (1-2t)^{-1}}. $$
This can be obtained by
letting $\Theta=t$, $\Sigma=1$, $\Delta=\delta$ with (\ref{mgf-real}).

The Laguerre polynomials, the orthogonal polynomial systems on $(0,\infty)$
with respect to the gamma weight functions, are defined as
$$ L^{(\nu)}_n(x)
 = 2^n \frac{d^n}{d\sigma^n} f^{(\nu)}(x;\sigma) \Big|_{\sigma=1}
 \Big/ f^{(\nu)}(x;1) \quad (\nu>0), $$
where
$$ f^{(\nu)}(x;\sigma) = x^{\nu/2-1}\sigma^{-\nu/2} e^{-x/(2\sigma)} $$
 (e.g., \cite{Morris82}).  From this definition, we immediately get
the generating function of the Laguerre polynomial as
$$
 \sum_{n=0}^\infty (-1)^n \frac{t^n}{n!} L^{(\nu)}_n(x)
 = (1-2t)^{-\nu/2} e^{-(x/2)((1-2t)^{-1}-1)},
$$
which has formal coincidence with the moment generating function of
the chi-square distribution with $\nu$ degrees of freedom and
the noncentrality parameter $-x$.  Therefore, we have an expression
for the Laguerre polynomials
\begin{align*}
L^{(\nu)}_n(x)
& = (-1)^n \sum_{m=0}^n \sum_{\nu\ge 0} \nu^l f_{l,m,n} (-x)^{n-m} \\
& = (-1)^n \sum_{m=0}^n
 {n \choose m} \prod_{i=1}^m (\nu +2(n-i)) (-x)^{n-m}.
\end{align*}
This gives a combinatorial interpretation for the coefficients of the
Laguerre polynomials.

This type of combinatorial interpretation
for Hermite polynomials is widely known.
Applying a degenerate multivariate distribution
$N((\mu_i),(\sigma_{ij}))$ with $\mu_i\equiv\mu$, $\sigma_{ij}\equiv\sigma^2$
to Lemma \ref{lemma:wick-real}, we see that
the $n$th moment of the normal distribution $X\sim N(\mu,\sigma^2)$ is
$$ E[X^n] = \sum_{m=0}^{[n/2]} a_{m,n}\sigma^{2m} \mu^{n-2m}, $$
where
$$ a_{m,n} = {n \choose 2m} (2m-1)!! = \frac{n!}{(n-2m)! \, 2^m \, m!} $$
is the number of partitions of an $n$-member set into
(unordered) $m$ pairs and $n-2m$ singletons.
The Hermite polynomials are defined by
$$ H_n(x) = (-1)^n \frac{d^n}{d x^n} e^{-x^2/2} \Big/ e^{-x^2/2}. $$
The generating function of $H_n(x)$ is
$$ \sum_{n=0}^\infty \frac{t^n}{n!} H_n(x) = e^{-(x-t)^2/2} \Big/ e^{-x^2/2}
 = e^{xt -t^2/2}, $$
which coincides with the moment generating function
$e^{\mu t+\sigma^2 t^2/2}$ of the normal distribution $N(\mu,\sigma^2)$
with $\mu$ and $\sigma^2$ replaced by $x$ and $-1$, respectively.
Therefore, the $n$th Hermite polynomial is the $n$ moment of the
normal distribution $N(x,-1)$ formally 
 (\cite{McCullagh1987}, \cite{Kuriki-Takemura1996}, 
\cite{Withers-Nadarajah2006}),
and hence
$$ H_n(x) = 
 \sum_{m=0}^{[n/2]} a_{m,n} \, (-1)^m x^{n-2m}. $$

\subsection{Moments of bivariate chi-square distribution}

In this subsection, we give an explicit expression for
the moment of bivariate chi-square distributions
as the second application of the graph presentations for the moments
of the noncentral Wishart distribution.

There are several proposals for defining bivariate chi-square
 (gamma) distributions.
The distribution we will discuss here is that given by \cite{Kibble1941}.
Kibble's bivariate chi-square distribution is defined
as the distribution of the diagonal elements $(w_{11},w_{22})$ of
a $2\times 2$ central Wishart distribution $(w_{ij})\sim W_2(\nu,\Sigma)$
with
$$ \Sigma = \begin{pmatrix} 1 & \rho \\ \rho & 1 \end{pmatrix}. $$
Substituting
$\Theta=\diag(t_{11},t_{22})$ and $\Delta=0$ into (\ref{mgf-real}), 
we have the moment generating function
\begin{align}
E[e^{t_{11} w_{11}+t_{22} w_{22}}]
&= \det\begin{pmatrix} 1 - 2 t_{11} & - 2\rho t_{11} \nonumber \\
                     - 2\rho t_{22} &  1 - 2 t_{22} \end{pmatrix}^{-\nu/2}
\nonumber \\
& = (1 - 2 t_{11} - 2 t_{22} + 4 t_{11} t_{22}
 - 4 \rho^2 t_{11} t_{22})^{-\nu/2}.
\label{mgf-bivariategamma}
\end{align}

The next theorem gives an expression for the moments of general degrees
$E[w_{11}^b w_{22}^c]$.  We evaluate this as the moment
$$ E[\widetilde w_{12}\cdots\widetilde w_{2b-1,2b}
 \widetilde w_{2b+1,2b+2}\widetilde w_{2(b+c)-1,2(b+c)}], $$
where $(\widetilde w_{ij})\sim W_{2(b+c)}(\nu,(\sigma_{ij}))$ with
$$ \sigma_{ij} = \begin{cases}
 1 & (i,j\le 2b\text{ or }i,j\ge 2b+1), \\
 \rho & (\text{otherwise}). \end{cases} $$

\begin{theorem}[Moment of the bivariate chi-square distribution]
\label{theorem:bigamma}
Let $b$ and $c$ be nonnegative integers.  Then,
\begin{align*}
E[w_{11}^b w_{22}^c]
= \sum_{a=0}^{\min(b,c)} \rho^{2a} & \frac{2^a\,b!\,c!}{(b-a)!\,(c-a)!\,a!}
  \prod_{i=1}^{a}(\nu+2(a-i)) \\
& \times
 \prod_{i=1}^{b-a}(\nu+2(b-i))
 \prod_{i=1}^{c-a}(\nu+2(c-i)).
\end{align*}
\end{theorem}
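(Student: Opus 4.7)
The plan is to apply Theorem~\ref{theorem:moment-real} to the degenerate Wishart matrix $(\widetilde w_{ij})$ introduced just before the theorem and organize the resulting sum by the number of edges crossing between the blocks $A=\{1,\dots,2b\}$ and $B=\{2b+1,\dots,2(b+c)\}$. Since $\widetilde\delta_{ij}\equiv 0$, only perfect matchings $E$ on $V=A\cup B$ contribute, and each edge of $E$ contributes a factor $1$ if it is internal to $A$ or $B$ and $\rho$ if it crosses. The number of crossings is necessarily even (by parity of $|A|=2b$ and $|B|=2c$), say $2a$ with $0\le a\le\min(b,c)$, so that $\widetilde\sigma^{E}=\rho^{2a}$.

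I would parameterize such a matching as a triple $(M_A,M_B,\phi)$, where $M_A$ is a matching on $A$ that leaves a $2a$-subset $X_A\subset A$ unmatched, $M_B$ is the analogous object on $B$ with unmatched set $X_B$, and $\phi:X_A\to X_B$ is the bijection giving the crossings. The cycles of $G=(V,E_0\cup E)$ then split into cycles within $G_A=(A,E_0^A\cup M_A)$ (count $c_A$), cycles within $G_B=(B,E_0^B\cup M_B)$ (count $c_B$), and spanning cycles (count $c_S$) obtained by walking alternately through the $a$ paths of $G_A$ and $G_B$ via the crossings. The sums $\sum_{M_A}\nu^{c_A}$ and $\sum_{M_B}\nu^{c_B}$ are exactly the partition sums of Theorem~\ref{theorem:generating-function}, equal to $\Phi_{b-a,b}(\nu)$ and $\Phi_{c-a,c}(\nu)$ respectively, since they count partitions of $2b$ (resp.\ $2c$) indices into $b-a$ (resp.\ $c-a$) pairs and $2a$ singletons, weighted by the cycle number.

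The crux is then the ``spanning identity''
$$ \sum_{\phi:X_A\to X_B}\nu^{c_S}\;=\;2^a a!\,\Phi_{a,a}(\nu), $$
whose right-hand side does not depend on the pairings $\pi_A,\pi_B$ of $X_A,X_B$ induced by the paths of $G_A,G_B$. I would establish it via the change of variable $\tau=\phi^{-1}\pi_B\phi$: this sends a bijection $\phi$ to a fixed-point-free involution $\tau$ of $X_A$, with each fiber of size $2^a a!$ (the order of the centralizer of $\pi_B$ in the symmetric group on $X_B$). Tracing one spanning cycle shows that $c_S$ equals the number of cycles in the graph $(X_A,\pi_A\cup\tau)$, so the sum over $\tau$ equals $\Phi_{a,a}(\nu)=\prod_{i=1}^a(\nu+2(a-i))$ by Theorem~\ref{theorem:generating-function} with $n=m=a$. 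Assembling the three factors gives
$$ E[w_{11}^b w_{22}^c]\;=\;\sum_{a=0}^{\min(b,c)}\rho^{2a}\,\Phi_{b-a,b}(\nu)\,\Phi_{c-a,c}(\nu)\cdot 2^a a!\,\Phi_{a,a}(\nu), $$
and unpacking $\Phi_{m,n}(\nu)=\binom{n}{m}\prod_{i=1}^m(\nu+2(n-i))$ produces the claimed formula after elementary rearrangement of binomial coefficients.

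I expect the spanning identity to be the main obstacle: one must verify both that each spanning cycle of $G$ corresponds bijectively to one cycle of $(X_A,\pi_A\cup\tau)$ under careful walk-tracing and that the fibers of $\phi\mapsto\tau$ coincide with the cosets of the centralizer of $\pi_B$, so that the $2^a a!$ factor is correctly accounted for.
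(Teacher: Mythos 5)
Your argument is correct and arrives at the stated formula, but its combinatorial core differs from the paper's. The paper also reduces to perfect matchings graded by the number $2a$ of crossing edges, but organizes the count asymmetrically: it first lays down a partial matching on the $V_2$-side (contributing $f_{l',c-a,c}$ and creating $a$ chains), then a \emph{full} matching on the $V_1$-side (contributing $f_{l'',b,b}$, so that every $V_1$-component is a cycle), and finally produces the crossing edges by cutting $a$ of the $b$ dashed $V_1$-edges and splicing the $a$ chains into the cuts, in $2^a\,b!/(b-a)!$ ways; since splicing a chain into a cut edge of a cycle leaves it a single cycle, the total cycle count is simply $l'+l''$ and no further lemma is needed. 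Your symmetric decomposition into $(M_A,M_B,\phi)$ instead isolates the spanning cycles and requires the identity $\sum_\phi\nu^{c_S}=2^a a!\,\Phi_{a,a}(\nu)$, which you correctly reduce to the enumeration behind $f_{l,a,a}$ (pairs of perfect matchings on $2a$ points graded by cycles) via the centralizer argument: the centralizer of a fixed-point-free involution on $2a$ points has order $2^a a!$, the image of $\phi\mapsto\phi^{-1}\pi_B\phi$ is the full set of fixed-point-free involutions of $X_A$, and the components of $\pi_A\cup\tau$ are exactly the spanning cycles. Both routes lean on Theorem \ref{theorem:generating-function} and yield $\sum_a\rho^{2a}\binom{b}{a}\binom{c}{a}2^a a!\,\Phi_{a,a}\prod_{i=1}^{b-a}(\nu+2(b-i))\prod_{i=1}^{c-a}(\nu+2(c-i))$, which matches the theorem after absorbing $\Phi_{a,a}=\prod_{i=1}^{a}(\nu+2(a-i))$; yours buys symmetry in $b$ and $c$ and a reusable ``spanning identity,'' at the cost of the extra walk-tracing and coset-size verification that the paper's cut-and-splice bijection sidesteps.
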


\begin{remark}
\cite{Nadarajah-Kotz2006} derived an expression for $E[w_{11}^b w_{22}^c]$
including the Jacobi polynomials.  Their derivation is a use of a series of
identities of special functions, which are totally different from our
combinatorial proof given below.
\end{remark}

\begin{proof}
Assume that $b\le c$ without loss of generality.
Let $G_0=(V_1\cup V_1,E_1\cup E_2)$ be a union of two undirected graphs
$(V_i,E_i)$ ($i=1,2$), where
$$ V_1=\{ 1,2,\ldots,2b-1,2b \}, \quad E_1=\{ (1,2),\ldots,(2b-1,2b) \} $$
and
$$ V_2=\{ 2b+1,2b+2,\ldots,2(b+c)-1,2(b+c) \}, $$
$$ E_2=\{ (2b+1,2b+2),\ldots,(2(b+c)-1,2(b+c)) \}. $$

By forming $b+c$ pairs from $2(b+c)$ vertices of $V=V_1\cup V_2$,
we add $b+c$ edges joining two vertices of each pair to the graph $G_0$
to make $G$.
Let $h_{l,a,b,c}$ be the number of resulting graphs $G$ such that
the number of cycles is $l$ and the number of edges joining an
element of $V_1$ and an element of $V_2$ 
(the number of pairs consisting of
 an element of $V_1$ and an element of $V_2$) is $2a$.
Then, the moment that we want to evaluate is represented as
$$ \sum_{l,a} \nu^l \rho^{2a} h_{l,a,b,c}. $$
We divide the process of adding $b+c$ edges into three steps below
 (Figure \ref{figure:bigamma}).

Step (i).
Choose $2c-2a$ from the $2c$ vertices of $V_2$, and form $c-a$ pairs
from the $2c-2a$ vertices. 
Add $c-a$ edges joining two vertices of each pair to the graph $G_0$. 
The number of resulting graphs having $l'$ cycles is
$$ f_{l',c-a,c}. $$
Note that $2a$ vertices not chosen are the terminal vertices of $a$ chains.

Step (ii).
Form $b$ pairs from the $2b$ vertices of $V_1$, and add $b$ edges
joining two vertices of each pair.
The number of resulting graphs having $l''$ cycles is
$$ f_{l'',b,b}. $$

Step (iii).
Choose $a$ edges from the $b$ edges added in step (ii), and
make ``cuts''
at the middle of each edge, and have the $a$ chains generated in step (i)
fit in at the $a$ cut points.
Note that this operation does not alter the number of cycles.
Since the $a$ chains have directions, the number of ways
to perform this operation is
$$ 2^a \times \frac{b!}{(b-a)!}. $$

Summing up (i), (ii), and (iii), we get
$$ h_{l,a,b,c}
 = \frac{2^a\,b!}{(b-a)!} \sum_{l'+l''=l} f_{l',c-a,c} f_{l'',b,b}. $$
Therefore,
\begin{align*}
\sum_{l,a} \nu^l \rho^{2a} h_{l,a,b,c}
& = \sum_a \rho^{2a} \frac{2^a\,b!}{(b-a)!}
 \sum_{l'}\nu^{l'} f_{l',c-a,c} \sum_{l''} \nu^{l''} f_{l'',b,b} \\
& = \sum_a \rho^{2a} \frac{2^a\,b!}{(b-a)!}
 {c \choose c-a} \prod_{i=1}^{c-a} (\nu+2(c-i)) \prod_{i=1}^b (\nu+2(b-i)).
\end{align*}
\QED
\end{proof}

\begin{figure*}
\begin{center}
\scalebox{0.6}{\includegraphics{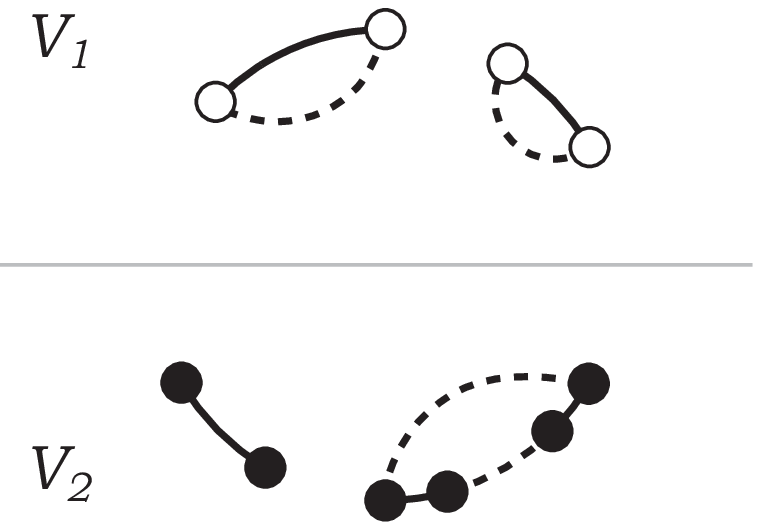}} 
\qquad
\scalebox{0.6}{\includegraphics{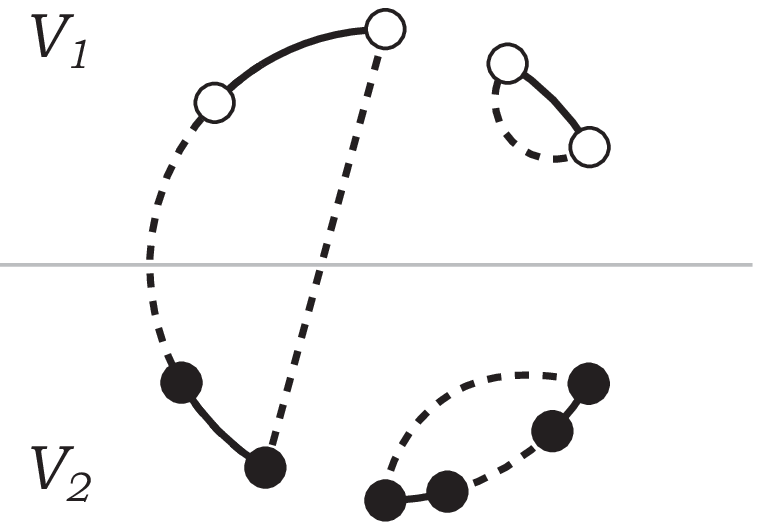}} 
\caption{
Figures for the proof of Theorem \ref{theorem:bigamma}
(Left: Steps (i), (ii), Right: Step (iii);
{\LARGE $\circ$} : Vertex of $V_1$, {\LARGE $\bullet$} : Vertex of $V_2$).
}
\label{figure:bigamma}
\end{center}
\end{figure*}

\subsection{Moments of a $2\times 2$ real Wishart distribution}

As the third example, we give an explicit expression for the moments
of a $2\times 2$ central Wishart distribution with the parameter $\Sigma=I$.
Let $(w_{ij})\sim W_2(\nu,I)$.
Substituting
$\Theta = \begin{pmatrix} t_{11} & t_{12}/2 \\ t_{12}/2 & t_{22} \end{pmatrix}$
and $\Delta=0$ into (\ref{mgf-real}), 
we have the moment generating function
\begin{align}
E[e^{t_{12} w_{12}+t_{11} w_{11}+t_{22} w_{22}}]
&= \det\begin{pmatrix} 1 - 2 t_{11} &      - t_{12} \\
                           - t_{12} &  1 - 2 t_{22} \end{pmatrix}^{-\nu/2} 
\nonumber \\
&= (1 - 2 t_{11} - 2 t_{22} + 4 t_{11} t_{22} - t_{12}^2)^{-\nu/2}.
\label{mgf-2x2-real}
\end{align}

We first show that
$$ E[w_{12}^a w_{11}^b w_{22}^c]=0 \quad \text{for $a$ odd}. $$
Let $X_t$ be Gaussian random vectors
making up the Wishart matrix.  Since $E[X_t]=0$, the distribution of
$X_t$ is invariant under the change of the sign of the first coordinate.
On the other hand, this change causes
$(w_{12},w_{11},w_{22})\mapsto (-w_{12},w_{11},w_{22})$.
This implies that
\begin{align*}
E[w_{12}^a w_{11}^b w_{22}^c]
& = E[(-w_{12})^a w_{11}^b w_{22}^c] \\
& = (-1)^a E[w_{12}^a w_{11}^b w_{22}^c].
\end{align*}
Unless $a$ is even, the left- and right-hand sides become 0.

In the following, we will derive the moment
$E[w_{12}^{2a} w_{11}^b w_{22}^c]$,
where $a,b$, and $c$ are nonnegative integers.
Although some methods to calculate this quantity are already known
 (e.g., Remark \ref{remark:geometric}), 
we demonstrate that our combinatoric approach does get the same results.

Let 
$$ V_1^0 = \{1,3,\ldots,4a-1\},\quad
 V_1^+ = \{4a+1,4a+2,\ldots,4a+2b\}, $$
and 
$$ V_2^0 = \{2,4,\ldots,4a\}, \quad
 V_2^+ = \{4a+2b+1,4a+2b+2,\ldots,4a+2b+2c\}. $$
Then, 
$$ E[w_{12}^{2a} w_{11}^b w_{22}^c]=
 E[\widetilde w_{12}\widetilde w_{34}\cdots
\widetilde w_{4a+2b+2c-1,4a+2b+2c}], $$
where $(\widetilde w_{ij})\sim W_{4a+2b+2c}(\nu,(\sigma_{ij}))$ with
$$ \sigma_{ij} = \begin{cases}
 1 & (i,j\in V_1^+,\text{ or }i,j\in V_2^+,\text{ or }
   i\in V_1^0, j\in V_2^0,\text{ or }i\in V_2^0, j\in V_1^0), \\
 0 & (\text{otherwise}). \end{cases} $$

\begin{theorem}[Moment of the $2\times 2$ real Wishart distribution]
\label{theorem:2x2wishart}
Let $a,b$, and $c$ be nonnegative integers.  Then,
\begin{equation*}
E[w_{12}^{2a} w_{11}^b w_{22}^c]
= (2a-1)!!
 \prod_{i=1}^{a}(\nu+2(a-i))
 \prod_{i=1}^{b}(\nu+2(a+b-i))
 \prod_{i=1}^{c}(\nu+2(a+c-i)).
\end{equation*}
\end{theorem}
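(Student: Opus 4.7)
The plan is to combinatorially evaluate the sum produced by Theorem \ref{theorem:moment-real} after the degeneracy reduction set up in the statement, paralleling the proof of Theorem \ref{theorem:bigamma}. Since $\Delta=0$ only perfect matchings $E$ contribute; the vanishing of $\sigma_{ij}$ between vertices of different ``type'' forces $E$ to split as $E=M_1\sqcup M_2$, where $M_1$ is a perfect matching of $U_1=V_1^0\cup V_1^+$ and $M_2$ a perfect matching of $U_2=V_2^0\cup V_2^+$. The ``12'' solid edges remain as the bipartite pairing $\phi:u_k\leftrightarrow v_k$ between $V_1^0$ and $V_2^0$.

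Restricted to $U_1$, vertices of $V_1^+$ have degree $2$ (one solid and one dashed) while those of $V_1^0$ have degree $1$; hence $M_1$ produces $k_1$ cycles supported in $V_1^+$ together with exactly $a$ chains whose $2a$ endpoints exhaust $V_1^0$ and determine a partition $P_1$ of $V_1^0$ into $a$ pairs. Define $k_2,P_2$ symmetrically on $U_2$. Reinstating the ``12'' edges stitches $V_1$-chains to $V_2$-chains via $\phi$, producing cross-cycles whose count $c_\times(P_1,P_2)$ depends only on the endpoint partitions, and $\len(G)=k_1+k_2+c_\times$. Because the construction is symmetric under permutations of $V_1^0$ and of $V_2^0$, the partial sums
\[
f_1=\sum_{M_1:\,P_1(M_1)=P_1^\star}\nu^{k_1(M_1)},\qquad f_2=\sum_{M_2:\,P_2(M_2)=P_2^\star}\nu^{k_2(M_2)}
\]
are independent of the fixed partitions $P_1^\star,P_2^\star$, so the moment factors as $f_1\cdot f_2\cdot\sum_{P_1,P_2}\nu^{c_\times(P_1,P_2)}$.

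To compute $f_1$ I will run a recursion on $b$ modelled on Lemma \ref{lamma:recurrence-f}, inspecting the last ``11'' solid pair $(p_{2b-1},p_{2b})$ in a contributing matching: either (i) the pair is itself a dashed edge, closing a $2$-cycle and reducing to the $(a,b-1)$ problem with a factor $\nu$, or (ii) $p_{2b-1},p_{2b}$ are dashed to distinct external vertices $x,y$, and contracting the segment $x\text{--}p_{2b-1}\text{--}p_{2b}\text{--}y$ to a single dashed edge $(x,y)$ is a bijection with $(a,b-1)$ matchings equipped with a distinguished dashed edge and an orientation, contributing the factor $2(a+b-1)$. This yields $f_1(a,b)=(\nu+2(a+b-1))\,f_1(a,b-1)$ with $f_1(a,0)=1$, hence $f_1=\prod_{i=1}^b(\nu+2(a+b-i))$; interchanging roles gives $f_2=\prod_{i=1}^c(\nu+2(a+c-i))$. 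For the cross term, contracting each $\phi$-edge of the meta-graph $(V_1^0\cup V_2^0,\,P_1\cup P_2\cup\phi)$ identifies $u_k$ with $v_k$ and realises $c_\times$ as the cycle count of $P_1\cup P_2$ regarded as a union of two perfect matchings on a $2a$-vertex set; applying Theorem \ref{theorem:generating-function} with $n=m=a$ to the sum over $P_2$ (with $P_1$ playing the role of $E_0$) gives $\Phi_{a,a}(\nu)=\prod_{i=1}^a(\nu+2(a-i))$, and summing over the $(2a-1)!!$ choices of $P_1$ produces the cross-cycle factor $(2a-1)!!\prod_{i=1}^a(\nu+2(a-i))$.

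The step I expect to be trickiest is the bijective contraction underlying the $f_1$ recursion: one must verify that removing the last ``11'' pair in case (ii) preserves the chain-endpoint partition $P_1$ and correctly tracks cycles versus chains in every sub-case, namely whether $x,y$ both lie in $V_1^+$, both in $V_1^0$, or one of each. Once this bookkeeping is settled, multiplying the three factors reproduces the product formula stated in the theorem.
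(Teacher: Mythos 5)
Your proposal is correct, and it reaches the stated product by a genuinely different organization than the paper's proof. The paper proceeds asymmetrically: it first builds the whole $V_2$-side (steps (i)--(ii)), obtaining a coefficient $e_{l',c,a}=\sum_{a'}f_{l',c-a',c}\,(2a-1)!!\,2^{a'}a!/(a-a')!$ whose generating function is evaluated via a Chu--Vandermonde convolution of binomial coefficients, and then finishes the $V_1$-side in one stroke by observing that the completed $V_2$-chains act as $a$ pre-existing edges on $V_1^0$, so the last step is counted by $f_{l'',a+b,a+b}$, i.e.\ $\Phi_{a+b,a+b}(\nu)$. You instead condition on the two chain-endpoint partitions $P_1,P_2$ of $V_1^0,V_2^0$ and factor the generating function symmetrically into $f_1\cdot f_2\cdot\sum_{P_1,P_2}\nu^{c_\times}$; the cross factor is exactly $(2a-1)!!\,\Phi_{a,a}(\nu)$ by the contraction of the $\phi$-edges (correct, since with $\Delta=0$ the dashed edges form a perfect matching, every vertex of $G$ has degree $2$, and each cross component is a cycle alternating $P_1$- and $P_2$-blocks), and $f_1=\Phi_{a+b,a+b}(\nu)/\Phi_{a,a}(\nu)=\prod_{i=1}^{b}(\nu+2(a+b-i))$ follows from your two-case recursion on the last $(w_{11})$-pair, whose contraction bijection does survive all three sub-cases ($x,y$ both in $V_1^+$, both in $V_1^0$, or mixed) because the contracted edge is always a dashed edge of the smaller matching and cycle counts and chain endpoints are preserved. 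Your route trades the paper's convolution identity for (a) the symmetry claim that $f_1$ does not depend on the chosen partition $P_1^\star$ --- which holds because permutations of $V_1^0$ fix the solid edges on $V_1^+$ and act transitively on pair-partitions of $V_1^0$, and which you should state explicitly since the whole factorization rests on it --- and (b) the new recursion $f_1(a,b)=(\nu+2(a+b-1))f_1(a,b-1)$. The two evaluations agree because $f_1f_2\,(2a-1)!!\,\Phi_{a,a}=(2a-1)!!\,\Phi_{a+b,a+b}\prod_{i=1}^{c}(\nu+2(a+c-i))$; what your version buys is symmetry in $b$ and $c$ and the avoidance of any special-function identity, at the cost of one extra structural lemma.
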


\begin{proof}
Let $V_1=V_1^0\cup V_1^+$ and $V_2=V_2^0\cup V_2^+$.
First, define an undirected graph $G_0=(V,E_0)$ with $4a+2b+2c$ vertices 
$V=V_1\cup V_2$ and $2a+b+c$ edges
$$ E_0 = \{(1,2),(3,4),\ldots,(4a+2b+2c-1,4a+2b+2c) \}. $$
Then, consider the addition of additional $2a+b+c$ edges to the graph $G_0$ 
to make $G$ such that no edges joining $V_1$ and $V_2$ are added.
Let $h_{l,a,b,c}$ be the number of resulting graphs $G$ such that
$G$ has $l$ cycles.  Then,
$$ E[w_{12}^{2a} w_{11}^b w_{22}^c] = \sum_{l \ge 0} \nu^l h_{l,a,b,c}. $$

We divide the process of adding $2a+b+c$ edges to $G_0$ into three
steps (i), (ii), and (iii) below
 (Figure \ref{figure:2x2wishart}).

Step (i).
Let $0\le a'\le \min(a,c)$.
Choose $2c-2a'$ elements from the $2c$ vertices of $V_2^+$,
 and form $c-a'$ pairs from them.
Add $c-a'$ edges defined by the $c-a'$ pairs to the graph $G_0$.
According to this operation, $a'$ chains are newly generated.
The number of graphs with $l'$ cycles is
$$ f_{l',c-a',c}. $$

Step (ii).
Form $a$ pairs from the $2a$ vertices of $V_2^0$. 
The number of ways in which this pairing can be done is
$$ (2a-1)!! = \frac{(2a)!}{2^{a}a!}. $$
Choose $a'$ pairs from the $a$ pairs, and assign each pair to
each of the $a'$ chains generated in step (i).
Connect a vertex of the pair to one terminal vertex of the chain using a new
edge, and connect the other vertex of the pair to the other terminal vertex
of the chain using another (new) edge. (Add $2a'$ edges in total.)
The number of the 
correspondences is
$$ 2a (2a-2) \cdots (2a-2a') = \frac{2^{a'}a!}{(a-a')!}. $$
For the remaining $a-a'$ pairs, connect two vertices of each pair using
 a new edge. (Add $a-a'$ edges in total.)

The number of edges added in steps (i) and (ii) is
$(c-a')+2a'+(a-a')=a+c$.  The number of cycles is $l'$.

In steps (i) and (ii), summing the number of ways
for $0\le a'\le \min(a,c)$ yields
$$ e_{l',c,a}
 = \sum_{a'=0}^{\min(a,c)} f_{l',c-a',c} \times (2a-1)!!  
 \times \frac{2^{a'}a!}{(a-a')!}. $$
The coefficient $e_{l',c,a}$ can be combinatorially interpreted as follows:
Let $G_2=(V_2,E_2)$ with 
$E_2 = \{(4a+2b+1,4a+2b+2),\ldots,(4a+2b+2c-1,4a+2b+2c) \}$ be an undirected
graph, and add $a+c$ edges by forming $a+c$ pairs from the $2a+2c$ vertices $V_2$.
Then, $a$ chains are newly generated. 
$e_{l',c,a}$ is the number of resulting graphs having $l'$ cycles,
and the terminal vertices of the $a$ chains are elements of $V_2^0$.

Step (iii).
For all $2a+2b$ vertices of $V_1$, form $a+b$ pairs and connect
the vertices of each pair with a new edge.  (Add $a+b$ edges in total.)
According to step (ii), the $2a$ vertices of $V_1$ have already been divided 
into $a$ pairs, and the vertices of each pair have been connected with an edge.
In step (iii), the number of graphs adding new $l''$ cycles is
$f_{l'',a+b,a+b}$.

Summarizing (i), (ii), and (iii), we get
$$ h_{l,a,b,c} = \sum_{l'+l''=l} e_{l',c,a} f_{l'',a+b,a+b}, $$
and hence,
\begin{equation}
\label{convolution}
\sum_{l\ge 0} \nu^l h_{l,a,b,c}
 = \sum_{l'\ge 0} \nu^{l'} e_{l',c,a}
 \sum_{l''\ge 0} \nu^{l''} f_{l'',a+b,a+b}.
\end{equation}

For a nonnegative integer $n$, write
$$ {a \choose n} = \frac{a(a-1)\cdots (a-n+1)}{n!}. $$
Then, the generating function of the coefficient
$e_{l',c,a}$ with respect to the number of cycles $l'$ is
\begin{align*}
\sum_{l'\ge 0} \nu^{l'} e_{l',c,a}
&= (2a-1)!! \sum_{a'=0}^{\min(a,c)} \sum_{l'\ge 0} \nu^{l'} f_{l',c-a',c}
 \frac{2^{a'}a!}{(a-a')!} \\
&= (2a-1)!! \sum_{a'=0}^{\min(a,c)}
 {c \choose c-a'} \prod_{i=1}^{c-a'} (\nu+2(c-i))  \frac{2^{a'}a!}{(a-a')!} \\
&= (2a-1)!! \sum_{a'=0}^{\min(a,c)}
 {c \choose c-a'} 2^{c-a'} {\nu/2+c-1 \choose c-a'} (c-a')!
 \frac{2^{a'}a!}{(a-a')!} \\
&= (2a-1)!! \, 2^c c! \sum_{a'=0}^{\min(a,c)}
 {a \choose a'} {\nu/2+c-1 \choose c-a'} \\
&= (2a-1)!! \, 2^c c! {\nu/2+a+c-1 \choose c} \\
&= (2a-1)!! \prod_{i=1}^{c} (\nu+2(a+c-i)). 
\end{align*}
The fifth equality above is known as a convolution identity
 for two binomial coefficients.
Substituting this into (\ref{convolution}) completes the proof.
\QED
\end{proof}

\begin{figure*}
\begin{center}
\scalebox{0.6}{\includegraphics{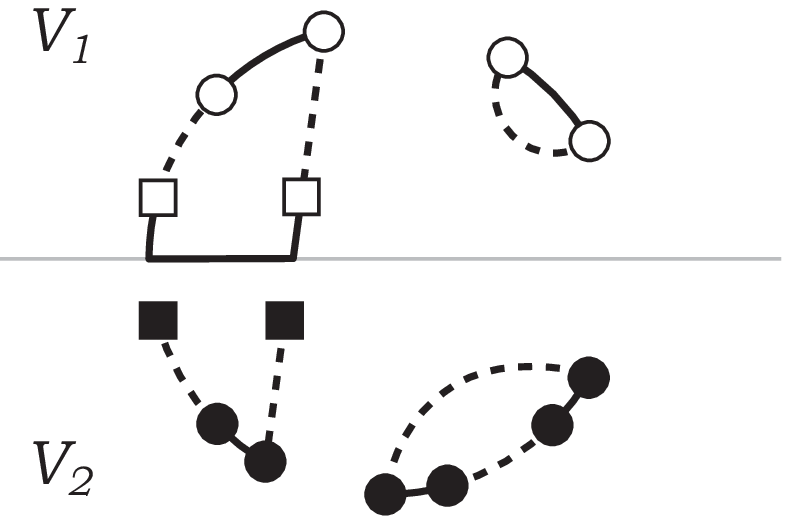}} 
\qquad
\scalebox{0.6}{\includegraphics{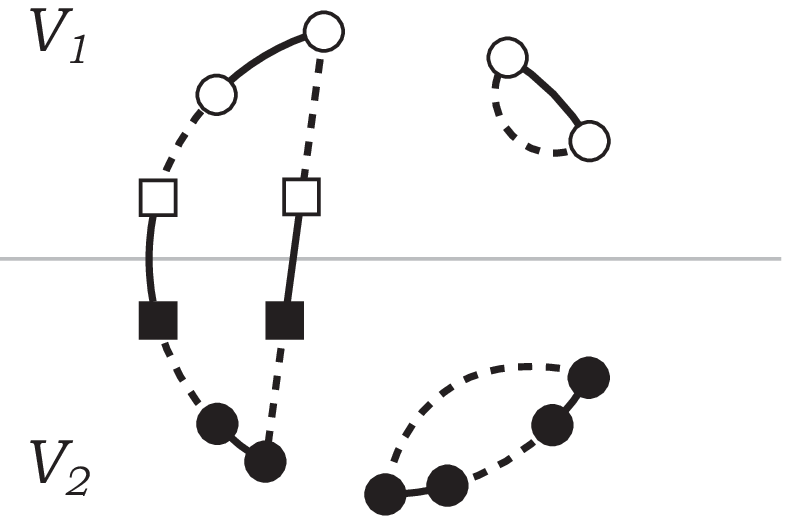}} 
\caption{
Figures for the proof of Theorem \ref{theorem:2x2wishart}
(Left: Steps (i), (ii), Right: Step (iii); 
 {\scriptsize $\Box$} : Vertex of $V_1^0$,
 {\LARGE $\circ$} : Vertex of $V_1^+$,
 {\LARGE $\hbox{\vrule width 5pt height 5pt depth 0pt}$} : Vertex of $V_2^0$,
 {\LARGE $\bullet$} : Vertex of $V_2^+$).}
\label{figure:2x2wishart}
\end{center}
\end{figure*}

\begin{remark}
\label{remark:geometric}
Theorem \ref{theorem:2x2wishart} can also be shown by the following geometric
consideration.  Let $X_t=\begin{pmatrix} x_{t1} \\ x_{t2} \end{pmatrix}$ be
Gaussian random vectors making up the $2\times 2$ Wishart matrix
 $(w_{ij})\sim W_2(\nu,I)$.
Write $X_{(i)}=(x_{1i},\ldots,x_{\nu i})'$ ($i=1,2$),
where ${}'$ denotes the matrix transposition.
Since the $\nu$-dimensional distribution of $X_{(i)}$ is invariant
under the orthogonal transformation preserving the norm
$\Vert X_{(i)}\Vert=\sqrt{X_{(i)}'X_{(i)}}$, four quantities
$\Vert X_{(i)}\Vert$, $X_{(i)}/\Vert X_{(i)}\Vert$ for $i=1,2$ are
independently distributed.  Thus,
\begin{align*}
E[w_{12}^{2a} w_{11}^b w_{22}^c]
&=E\bigl[(X_{(1)}'X_{(2)})^{2a}
 \Vert X_{(1)}\Vert^{2b}
 \Vert X_{(2)}\Vert^{2c}\bigr] \\
&=E\bigl[r^{2a}\bigr]
 E\bigl[\Vert X_{(1)}\Vert^{2(a+b)}\bigr]
 E\bigl[\Vert X_{(2)}\Vert^{2(a+c)}\bigr],
\end{align*}
where $r=X_{(1)}'X_{(2)}/(\Vert X_{(1)}\Vert\Vert X_{(2)}\Vert)$.
This calculation can be completed by noting that
$\Vert X_{(i)}\Vert^2\sim\chi^2_\nu$ and
$r^2\sim B(\frac{1}{2},\frac{\nu-1}{2})$, the beta distribution.
\end{remark}

\section{Moments of the noncentral complex Wishart distribution}
\label{section:complex}

\subsection{Preliminaries on the complex normal distribution}

In this section, we will deal with the complex noncentral Wishart matrices.
Major parts of the discussion are parallel to the real case.
One remarkable difference is that the moments in the complex case are described
in terms of directed graphs,
whereas those in the real cases are described in terms of undirected graphs.  

We begin by summarizing some preliminaries on the complex normal
distribution and the complex Wishart distribution.
Let $\overline{Z}=(\overline{z_i})$ be a complex conjugate of $Z=(z_i)$.
The following lemma is a complex version of Lemma \ref{lemma:wick-real}.

\begin{lemma}[Moment of the complex normal distribution]
\label{lemma:wick-complex}
Let $Z=(z_i)\sim CN(\mu,\Sigma)$, and let $\overline{i}=n+i$, $i=1,\ldots,n'$.
Then,
$$
 E[z_1\cdots z_n
 \overline{z_{\overline{1}}}\cdots\overline{z_{\overline{n'}}}]
 = \sum
   \sigma_{i_1 j_1}\cdots\sigma_{i_m j_m}
   \mu_{i_{m+1}}\cdots\mu_{i_{n}}
   \overline{\mu_{j_{m+1}}}\cdots\overline{\mu_{j_{n'}}},
$$
where the summation is over possible pairing
$\{(i_1,j_1),(i_2,j_2),\ldots (i_m,j_m)\}$ such that
$$ \{i_1,\ldots,i_m\} \subset \{1,2,\ldots,n\} \ \ \text{and}\ \ %
 \{j_1,\ldots,j_m\} \subset \{\overline{1},\ldots,\overline{n'}\} $$
(i.e., matching).
The other indices are
\begin{align*}
\{i_{m+1},\ldots,i_{n}\} &= \{1,\ldots,n\}\setminus\{i_1,\ldots,i_k\}, \\
\{j_{m+1},\ldots,j_{n'}\} &= \{\overline{1},\ldots,\overline{n'}\}
 \setminus\{j_1,\ldots,j_k\}.
\end{align*}
\end{lemma}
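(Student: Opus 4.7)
The plan is to reduce the complex case to the real Wick formula of Lemma \ref{lemma:wick-real} by exploiting the block structure $\begin{pmatrix} A & -B \\ B & A \end{pmatrix}$ of the covariance of the underlying real vector $(X,Y)$ defining $Z = X + \ii Y$.

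First I would split each variable into mean plus fluctuation, $z_i = \mu_i + \tilde z_i$ with $\tilde z_i \sim CN(0,\Sigma)$ centered, and expand the product $z_1 \cdots z_n \overline{z_{\overline 1}} \cdots \overline{z_{\overline{n'}}}$ into a sum over subsets $S \subset \{1,\ldots,n,\overline 1,\ldots,\overline{n'}\}$ recording which positions carry a fluctuation factor; the positions outside $S$ contribute the corresponding $\mu_i$ or $\overline{\mu_j}$ immediately. Taking expectations reduces the task to evaluating, for each $S$, a purely centered mixed moment of the $\tilde z$'s and $\overline{\tilde z}$'s.

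Next I would establish the three second-moment identities that drive the combinatorics: $E[\tilde z_i \tilde z_j] = 0$, $E[\overline{\tilde z_i}\,\overline{\tilde z_j}] = 0$, and $E[\tilde z_i \overline{\tilde z_j}] = \sigma_{ij}$. Each is a direct computation from $\tilde z_k = \tilde x_k + \ii \tilde y_k$ using $A = A^\top$ and $B = -B^\top$; for example, $E[\tilde z_i \tilde z_j] = (A_{ij} - A_{ij}) - \ii (B_{ij} + B_{ji}) = 0$ by the skew-symmetry of $B$, while the mixed moment collapses to $2A_{ij} + 2\ii B_{ij} = \sigma_{ij}$. Applying Lemma \ref{lemma:wick-real} to the real Gaussian vector $(\tilde X, \tilde Y)$ and reassembling real and imaginary parts then shows that in the Wick sum for the centered mixed moment on $S$, all ``same-type'' pairings (two unbarred or two barred) vanish, and only the matchings of one unbarred index in $\{1,\ldots,n\} \cap S$ with one barred index in $\{\overline 1,\ldots,\overline{n'}\} \cap S$ survive, each contributing a factor $\sigma_{ij}$.

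Combining the mean/fluctuation expansion with the centered Wick identity reproduces the statement: the paired indices $\{(i_1,j_1),\ldots,(i_m,j_m)\}$ are exactly those selected by the matching, the remaining unbarred indices give the $\mu$'s, and the remaining barred indices give the $\overline\mu$'s. The main obstacle I expect is the second-moment step: the sign bookkeeping that forces $E[\tilde z_i \tilde z_j] = 0$ and its conjugate must be done carefully so that only the cross pairings $\tilde z_i$--$\overline{\tilde z_j}$ contribute nontrivially. Once that cancellation is in hand, the rest is a direct transcription of the real Wick argument into complex coordinates.
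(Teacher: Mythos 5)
Your argument is correct, but it follows a genuinely different route from the paper's. You center the variables, expand the product over the subset of positions carrying a fluctuation, and reduce each centered mixed moment to the real Wick formula (Lemma~\ref{lemma:wick-real}) applied to $(\widetilde X,\widetilde Y)$, with the decisive input being the three second moments $E[\tilde z_i\tilde z_j]=0$, $E[\overline{\tilde z_i}\,\overline{\tilde z_j}]=0$, $E[\tilde z_i\overline{\tilde z_j}]=\sigma_{ij}$. The paper instead computes the joint moment generating function of the pair $(Z,\overline Z)$ by pushing the real Gaussian $(X,Y)$ through the linear map $J=\begin{pmatrix} I & \ii I\\ I & -\ii I\end{pmatrix}$, obtaining $\exp\{\theta'\mu+\varphi'\overline{\mu}+\theta'\Sigma\varphi\}$; this identifies in one stroke that the only nonvanishing joint cumulants are the means and the mixed covariances $\sigma_{i\overline{j}}$, and the lemma is then read off from the general moment--cumulant relation. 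The underlying cancellation is identical in the two proofs: the absence of $\theta$--$\theta$ and $\varphi$--$\varphi$ quadratic terms in the exponent is exactly your vanishing of the same-type second moments, driven by the symmetry of $A$ and skew-symmetry of $B$. What the MGF route buys is that the vanishing of all higher-order and same-type cumulants is automatic, whereas your reduction requires the step of ``reassembling real and imaginary parts,'' i.e., the extension of the centered Wick sum from real coordinates to complex linear combinations of a jointly Gaussian real vector; this is a routine multilinearity argument, but you should state it explicitly rather than leave it implicit, since it is the only place where a same-type pairing could in principle survive. What your route buys is economy of tools: it uses only the already-stated Lemma~\ref{lemma:wick-real} plus elementary covariance computations, with no need to invoke the moment--cumulant relation for the complex pair.
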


\begin{proof}
Write $Z=X+\ii Y$, $\mu=\xi+\ii\eta$, $\Sigma=2(A+\ii B)$.
Let $\theta=(\theta_i)$, $\varphi=(\varphi_i)$ be parameter column vectors.
Because of
$$ \begin{pmatrix} X \\ Y \end{pmatrix} \sim
 N\left(\begin{pmatrix} \xi \\ \eta \end{pmatrix},
        \begin{pmatrix} A & -B \\ B & A \end{pmatrix}\right), $$
and
$$ \begin{pmatrix} Z \\ \overline{Z} \end{pmatrix} = J 
 \begin{pmatrix} X \\ Y \end{pmatrix}, \qquad
J = \begin{pmatrix} I & \ii I \\ I & -\ii I \end{pmatrix}, $$
the moment generating function of $(Z,\overline{Z})$ is obtained as
\begin{align*}
E[\exp\{\theta'Z+\varphi'\overline{Z}\}]
&= \exp\biggl\{
 \begin{pmatrix} \theta' & \varphi' \end{pmatrix} J \begin{pmatrix} \xi \\ \eta \end{pmatrix} 
 + \frac{1}{2}
 \begin{pmatrix} \theta' & \varphi' \end{pmatrix} J
 \begin{pmatrix}A & -B \\ B & A\end{pmatrix}
 J' \begin{pmatrix} \theta \\ \varphi \end{pmatrix}
 \biggr\} \\
&= \exp\biggl\{
 \begin{pmatrix} \theta' & \varphi' \end{pmatrix} \begin{pmatrix} \mu \\ \overline{\mu} \end{pmatrix}
 + \frac{1}{2}
 \begin{pmatrix} \theta' & \varphi' \end{pmatrix}
 \begin{pmatrix}0 & \Sigma \\ \overline\Sigma & 0\end{pmatrix}
 \begin{pmatrix} \theta \\ \varphi \end{pmatrix}
 \biggr\} \\
&= \exp\Bigl\{ \theta'\mu + \varphi'\overline{\mu}
 + \frac{1}{2}(\theta'\Sigma\varphi+\varphi'\overline\Sigma\theta) \Bigr\} \\
&= \exp\{ \theta'\mu + \varphi'\overline{\mu} + \theta'\Sigma\varphi \}.
\end{align*}
From this, we have the joint cumulants of $(Z,\overline{Z})$ as
$$
\Cum(z_1,\ldots,z_n,\overline{z_{\overline{1}}},\ldots,\overline{z_{\overline{n'}}})
 = \begin{cases}
 \mu_1                        & (n=1,\,n'=0), \\
 \overline{\mu_{\overline{1}}} & (n=0,\,n'=1), \\
 \sigma_{1\overline{1}}        & (n=1,\,n'=1), \\
 0                            & (\text{otherwise}). \end{cases}
$$
Lemma \ref{lemma:wick-complex} is the moment-cumulant relation for
this particular cumulants.
\QED
\end{proof}

\subsection{A graph presentation}

Let $Z_t =(z_{ti})$ ($t=1,\ldots,\nu$) be independent complex Gaussian random
vectors with mean $\mu_t$ and covariance matrix $\Sigma$.
A complex Wishart matrix $W=(w_{ij})$ is constructed from $Z_t$
as given in (\ref{wishart-complex}).
In this subsection, we give a formula for the moment
$E[w_{ab} w_{cd}\cdots w_{ef}]$ with $a,b,c,d,\ldots,e,f$ arbitrary indices.
By considering the degenerate case again, without loss of generality,
 we only have to treat the moment
$E[w_{1\overline{1}} w_{2\overline{2}}\cdots w_{n\overline{n}}]$
with $\overline{i}=n+i$, $i=1,\ldots,n$.

Let $V = \{1,2,\ldots,n\}$ be the set of indices
appearing in the expectation that we want to evaluate.
In the following, we consider a directed graph whose vertices are
the elements of $V$.
Choose a subset $V_1$ of $V$ such that its cardinality is
$|V_1|=m$, and consider an injection $\pi : V_1 \to V$.
The map $\pi$ defines a set of directed pairs
$$ E = \{ (i,\pi(i)) \mid i \in V_1 \}. $$
We regard the pair $(V,E)$ as a directed graph $G$,
where $V$ is the set of vertices, and $E$ is the set of directed edges.
Note that $E$ and the pair $(V_1,\pi)$ have one-to-one correspondence.

As in the undirected case, for a given $G$,
every connected component is classified as
 a ``cycle'' (a directed path without terminals) and
 a ``chain'' (a directed path with a starting terminal and an ending terminal).
For the map $\pi$, the number of chains is $n-m$,
where $n=|V|$, $m=|V_1|$.
The number of cycles of $G$ is denoted by $\len(G)$.
Note that $\len(G)\le m$.
Let $(j_1,k_1),\ldots,(j_{n-m},k_{n-m})\in V\times V$ be directed pairs of
ending and starting terminal vertices of $n-m$ chains of $G$, and let
$$ \check E = \{(j_1,k_1),\ldots,(j_{n-m},k_{n-m})\}. $$

Using the notations above, we give the general form for the moments as follows.

\begin{theorem}[Moment of the complex noncentral Wishart distribution]
\label{theorem:moment-complex}
Let $(w_{ij})\sim CW(\nu,(\sigma_{ij}),(\delta_{ij}))$, and let
$\overline{i}=i+n$, $i=1,\ldots,n$.  Then,
\begin{equation}
\label{moment-complex}
E[w_{1\overline{1}}\cdots w_{n\overline{n}}]
= \sum_E \nu^{\len(G)} \sigma^E \delta^{\check E},
\end{equation}
where
\begin{align*}
& \sigma^E = \prod_{(i,i')\in E} \sigma_{i\overline{i'}}
 = \sigma_{i_1{\overline{\pi(i_1)}}} \cdots
 \sigma_{i_m{\overline{\pi(i_m)}}}, \\
& \delta^{\check E} = \prod_{(j,j')\in \check E} \delta_{j\overline{j'}}
 = \delta_{j_1 \overline{j'_1}} \cdots \delta_{j_{n-m} \overline{j'_{n-m}}}.
\end{align*}
The summation $\sum_E$ is taken over all possibilities of
$V_1=\{i_1,\ldots,i_m\} \subset \{1,\ldots,n\}$, and injections $\pi:V_1\to V$.
\end{theorem}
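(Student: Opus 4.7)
My plan is to mirror the proof of Theorem \ref{theorem:moment-real}, using Lemma \ref{lemma:wick-complex} in place of Lemma \ref{lemma:wick-real}. After invoking the degeneracy argument to identify each barred index $\overline{i}=n+i$ with a copy of $i$ (so that $\sigma_{i\overline{j}}=\sigma_{ij}$, $\mu_{t\overline{i}}=\mu_{ti}$, and $\delta_{i\overline{j}}=\delta_{ij}$), I would expand
$$E[w_{1\overline{1}}\cdots w_{n\overline{n}}] = \sum_{t_1=1}^\nu\cdots\sum_{t_n=1}^\nu E\Bigl[\prod_{k=1}^n z_{t_k,k}\,\overline{z_{t_k,\overline{k}}}\Bigr],$$
and apply Lemma \ref{lemma:wick-complex} to the inner expectation. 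Each resulting term is indexed by a matching between a subset $\{z_{t_i,i}\}_{i\in V_1}$ of the unconjugated factors and an equally sized subset of the conjugated factors; parameterizing the matching by the injection $\pi:V_1\to V$ that sends $i\in V_1$ to the unique $\pi(i)\in V$ whose conjugated factor $\overline{z_{t_{\pi(i)},\overline{\pi(i)}}}$ is paired with $z_{t_i,i}$ produces exactly the directed edge set $E=\{(i,\pi(i)):i\in V_1\}$ appearing in the statement.

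Next I would exploit the key identity $\Cov(z_{si},\overline{z_{tj}})=1_{\{s=t\}}\sigma_{ij}$, together with independence of the $Z_t$'s, which forces $t_i=t_{\pi(i)}$ along every edge $(i,\pi(i))\in E$ and hence makes the running indices constant on each connected component of $G$. Unmatched unconjugated factors at vertices $j\notin V_1$ (the ending terminals of chains) contribute $E[z_{t_j,j}]=\mu_{t_j,j}$, while unmatched conjugated factors at vertices $k\notin\pi(V_1)$ (the starting terminals) contribute $E[\overline{z_{t_k,\overline{k}}}]=\overline{\mu_{t_k,k}}$. Reindexing the $t$'s per connected component as in the real proof, a directed cycle collapses to $\sum_{t=1}^\nu\prod\sigma=\nu\cdot\prod\sigma$, contributing one factor of $\nu$ and the $\sigma$-product around the cycle, while a directed chain from starting vertex $k_\ell$ to ending vertex $j_\ell$ collapses to
$$\sum_{t=1}^\nu \mu_{t,j_\ell}\,\overline{\mu_{t,k_\ell}}\cdot\prod\sigma \;=\; \delta_{j_\ell\overline{k_\ell}}\cdot\prod\sigma.$$
Taking the product of contributions over all connected components produces $\nu^{\len(G)}\sigma^E\delta^{\check E}$, and summing over all choices of $(V_1,\pi)$, equivalently over all admissible edge sets $E$, yields (\ref{moment-complex}).

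The main point requiring care, and essentially the only genuine departure from the real case, is that Lemma \ref{lemma:wick-complex} only permits pairings between one unconjugated factor and one conjugated factor, never within either group, whereas Lemma \ref{lemma:wick-real} pairs arbitrary indices. This built-in asymmetry is what forces $G$ to be directed: every edge carries a canonical orientation from a ``$z$'' slot to a ``$\overline{z}$'' slot, and each chain therefore acquires distinguishable starting and ending terminals. The noncentrality contribution of a chain becomes $\delta_{j\overline{k}}$ with the ending vertex on the left and the starting vertex on the right, which has no counterpart in the undirected real case and is the only bookkeeping item that needs to be verified carefully to match the statement exactly.
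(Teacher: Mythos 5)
Your proposal is correct and follows essentially the same route as the paper's own proof: expand the product over the $\nu$-sums, apply the complex Wick formula (Lemma \ref{lemma:wick-complex}) to parameterize terms by injections $\pi:V_1\to V$, use $\Cum(z_{si},\overline{z_{t\overline{j}}})=1_{\{s=t\}}\sigma_{i\overline{j}}$ to localize the running indices to connected components, and evaluate directed cycles to $\nu\prod\sigma$ and directed chains to $\delta_{j\overline{k}}\prod\sigma$ via $\sum_t \mu_{tj}\overline{\mu_{tk}}=\delta_{jk}$. Your identification of the ending/starting terminals with $V\setminus V_1$ and $V\setminus\pi(V_1)$, and of the asymmetric $z$--$\overline{z}$ pairing as the source of the directedness, matches the paper exactly.
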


\begin{example}
Consider the evaluation of the moment
$E[w_{1\overline{1}}w_{2\overline{2}}w_{3\overline{3}}]$.
Then, $V=\{1,2,3\}$.
There are 34 injections from subsets $V_1\subset V$ to $V$.
Figure \ref{figure:Gpi} is the graph
$G=(V,E)$ for $E=\{(1,1),(2,3)\}$ ($\check E=\{(3,2)\}$).
Summing up 34 possibilities, we have the following:
\begin{align*}
E[w_{1\overline{1}} w_{2\overline{2}} w_{3\overline{3}}] =
& \nu^3 \sigma_{1\overline{1}} \sigma_{2\overline{2}} \sigma_{3\overline{3}}
 +\nu^2 \sigma_{1\overline{2}} \sigma_{2\overline{1}} \sigma_{3\overline{3}} [3]
 +\nu \sigma_{1\overline{2}} \sigma_{2\overline{3}} \sigma_{3\overline{1}} [2] \\
&+\nu^2 \sigma_{1\overline{1}} \sigma_{2\overline{2}} \delta_{3\overline{3}} [3]
 +\nu \sigma_{1\overline{2}} \sigma_{2\overline{1}} \delta_{3\overline{3}} [3]
 +\nu \sigma_{1\overline{1}} \sigma_{2\overline{3}} \delta_{3\overline{2}} [6]
 +\sigma_{1\overline{2}} \sigma_{2\overline{3}} \delta_{3\overline{1}} [6] \\
&+\nu \sigma_{1\overline{1}} \delta_{2\overline{2}} \delta_{3\overline{3}} [3]
 +\sigma_{1\overline{2}} \delta_{2\overline{1}} \delta_{3\overline{3}} [6] \\
&+\delta_{1\overline{1}} \delta_{2\overline{2}} \delta_{3\overline{3}}.
\end{align*}
Here, $[n]$ means that there are $n$ terms of similar form.
\end{example}

\begin{figure*}
\begin{center}
\scalebox{0.4}{\includegraphics{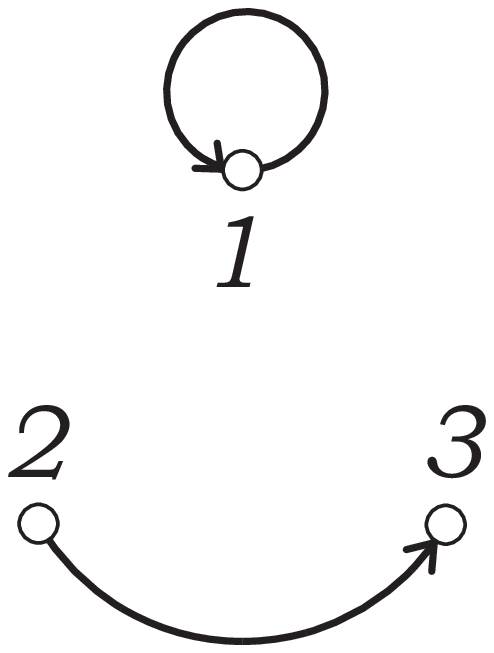}} 
\caption{Directed graph $G=(V,E)$ presenting the term
 $\nu^1 \sigma_{1\overline{1}} \sigma_{2\overline{3}} \delta_{3\overline{2}}$
($n=3$, $m=2$, $\len(G)=1$).}
\label{figure:Gpi}
\end{center}
\end{figure*}

\begin{proof}
Note that $w_{ij}=\sum_{t=1}^\nu z_{ti} \overline{z_{tj}}$.
In view of Lemma \ref{lemma:wick-complex}, we have
\begin{align}
E[w_{1\overline{1}} & \cdots w_{n\overline{n}}] \nonumber \\
=& \sum_{t_1=1}^\nu\cdots\sum_{t_n=1}^\nu
 E[z_{t_1,1} \overline{z_{t_1,\overline{1}}}\cdots z_{t_n,n} \overline{z_{t_n,\overline{n}}}]
 \nonumber \\
=& \sum_E \sum_{t_1}\cdots\sum_{t_n}
 \Cum(z_{t_{i_1},i_1}, \overline{z_{t_{\pi(i_1)},\overline{\pi(i_1)}}}) \cdots
 \Cum(z_{t_{i_m},i_m}, \overline{z_{t_{\pi(i_m)},\overline{\pi(i_m)}}}) \nonumber \\
& \times
 E[z_{t_{i_{m+1}},i_{m+1}}] \cdots E[z_{t_{i_n},i_n}]
 E[\overline{z_{t_{i'_{m+1}},\overline{i'_{m+1}}}}] \cdots E[\overline{z_{t_{i'_n},\overline{i'_n}}}],
\label{sum_E-complex}
\end{align}
where $V_1=\{i_1,\ldots,i_m\}$,
$$ \{i_{m+1},\ldots,i_n\} = V\setminus V_1, \quad
   \{i'_{m+1},\ldots,i'_n\} = V\setminus \pi(V_1). $$
Since $\{i_1,\ldots,i_n\}=V$, the indices $i_1,\ldots,i_n$ can be
divided into connected components of the graph $G$.
A connected component having vertices $j_1,\ldots,j_{k}$
forms either a directed chain
$$ (j_1,j_2),(j_2,j_3),\ldots,(j_{k-2},j_{k-1}),(j_{k-1},j_k) $$
or a directed cycle
$$ (j_1,j_2),(j_2,j_3),\ldots,(j_{k-1},j_k),(j_k,j_1), $$
where $\pi(j_i)=j_{i+1}$ (and $\pi(j_k)=j_{1}$ in the cycle case).
Since the running $n$ indices $t_1,\ldots,t_n$ correspond to
 $n$ vertices of $V$,
the argument of the summation $\sum_E$ in (\ref{sum_E-complex}) is
a product of terms of the form
\begin{align}
\sum_{t_{j_1}} & \sum_{t_{j_2}}\cdots\sum_{t_{j_k}}
 E[\overline{z_{t_{j_1},\overline{j_1}}}]
 \Cum(z_{t_{j_1},j_1}, \overline{z_{t_{j_2},\overline{j_2}}})
 \Cum(z_{t_{j_2},j_2}, \overline{z_{t_{j_3},\overline{j_3}}}) \cdots \nonumber \\
& \times
 \Cum(z_{t_{j_{k-1}},j_{k-1}}, \overline{z_{t_{j_k},j_k}})
 E[z_{t_{j_k},j_k}]
\label{chain-complex}
\end{align}
in the chain case, or
\begin{align}
\sum_{t_{j_1}} & \sum_{t_{j_2}}\cdots\sum_{t_{j_k}}
 \Cum(z_{t_{j_1},j_1}, \overline{z_{t_{j_2},\overline{j_2}}})
 \Cum(z_{t_{j_2},j_2}, \overline{z_{t_{j_3},\overline{j_3}}}) \cdots \nonumber \\
& \times
 \Cum(z_{t_{j_{k-1}},j_{k-1}}, \overline{z_{t_{j_k},\overline{j_k}}})
 \Cum(z_{t_{j_k},j_k}, \overline{z_{t_{j_1},\overline{j_1}}})
\label{cycle-complex}
\end{align}
in the cycle case.
Noting that
$$ \Cum(z_{si},\overline{z_{t\overline{j}}})=\Cov(z_{si},z_{t\overline{j}})=
 1_{\{s=t\}} \sigma_{i\overline{j}} $$
and
$\sum_{t=1}^\nu E[z_{ti}] E[\overline{z_{t\overline{j}}}]=\delta_{i\overline{j}}$,
we see that
$(\ref{chain-complex}) =
 \sigma_{j_1 \overline{j_2}} \sigma_{j_2 \overline{j_3}}
 \cdots \sigma_{j_{k-1} \overline{j_k}} \delta_{j_k \overline{j_1}}$
and
$ (\ref{cycle-complex}) = \nu
 \sigma_{j_1 \overline{j_2}} \sigma_{j_2 \overline{j_3}}
 \cdots \sigma_{j_{k-1} \overline{j_k}} \sigma_{j_k \overline{j_1}}$.
This completes the proof.
\QED
\end{proof}

\subsection{Enumeration of directed graphs}

In this subsection, we evaluate the number of directed graphs appearing in the
expression (\ref{moment-complex})
for the moments of the complex Wishart distribution.

Let $V=\{1,\ldots,n\}$.  Let $V_1$ be a subset of $V$ such that $|V_1|=m$.
Let $\pi$ be an injection $V_1\to V$.
As explained in the previous subsection, we can define a directed graph
$G=(V,E)$ with $E=\{ (i,\pi(i)) \mid i\in V_1 \}$.
The connected components of $G$ are either a directed cycle or
a directed chain (the length may be 0).
Note that the number of chains is $n-m$.
Let $g_{l,m,n}$ be the number of such graphs having $l$ cycles.
The coefficient $g_{l,m,n}$ satisfies the following recurrence formula.

\begin{lemma}
\begin{equation}
\label{recurrence-g}
 g_{l,m,n} = g_{l-1,m-1,n-1} + g_{l,m,n-1} + (2n-m-1) g_{l,m-1,n-1}
\end{equation}
with boundary conditions
\begin{equation}
\label{boundary-g-1}
 g_{l,0,n} = \begin{cases} 1 & (l=0), \\ 0 & (l\ge 1) \end{cases}
 \quad \text{for $n\ge 1$},
\end{equation}
and
\begin{equation}
\label{boundary-g-2}
 g_{l,1,1} = \begin{cases} 0 & (l=0), \\ 1 & (l=1). \end{cases}
\end{equation}
\end{lemma}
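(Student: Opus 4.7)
The plan is to mimic the proof of Lemma (recurrence for $f_{l,m,n}$) in the undirected case, classifying by the role of a distinguished vertex. Since our objects are now pairs $(V_1,\pi)$ with $\pi:V_1\to V$ an injection rather than partitions into pairs and singletons, I will track the role of vertex $n$ in the resulting directed graph $G=(V,E)$, and set up a bijection between graphs counted by $g_{l,m,n}$ and graphs on $V'=\{1,\ldots,n-1\}$ obtained by deleting $n$.

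First I would enumerate the three mutually exclusive cases describing how $n$ appears in $G$:
\begin{itemize}
\item[(i)] $n$ is isolated (i.e.\ $n\notin V_1$ and $n\notin\pi(V_1)$);
\item[(ii)] $n$ carries a self-loop (i.e.\ $n\in V_1$ and $\pi(n)=n$);
\item[(iii)] $n$ belongs to a chain of length $\ge 1$ or a cycle of length $\ge 2$.
\end{itemize}
In case (i), removing $n$ leaves a directed graph on $n-1$ vertices with $m$ edges and $l$ cycles, contributing $g_{l,m,n-1}$. In case (ii), removing $n$ together with the self-loop kills one cycle and one edge, contributing $g_{l-1,m-1,n-1}$. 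Case (iii) is the main content and must be analyzed by a splicing operation: if $n$ has only an outgoing edge $(n,v)$, delete the edge and the vertex; symmetrically if $n$ has only an incoming edge $(u,n)$; if $n$ has both $(u,n)$ and $(n,v)$, replace them by the single edge $(u,v)$. In all three sub-cases the cycle count is preserved and the number of edges drops by exactly one, yielding a graph on $n-1$ vertices with $m-1$ edges and $l$ cycles.

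The main obstacle, as in the undirected proof, is confirming that this splicing is a $(2n-m-1)$-to-$1$ map, equivalently that any $G'$ counted by $g_{l,m-1,n-1}$ admits exactly $2n-m-1$ reinsertions of $n$. I would count these by sub-case: (a) reinsert $n$ as a new starting vertex by attaching an edge $(n,v)$ with $v\in V'\setminus\pi(V_1')$, giving $(n-1)-(m-1)=n-m$ choices; (b) reinsert $n$ as a new ending vertex by attaching $(u,n)$ with $u\in V'\setminus V_1'$, again $n-m$ choices; and (c) reinsert $n$ as an interior vertex by subdividing one of the $m-1$ edges of $G'$, giving $m-1$ choices. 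Adding gives $2(n-m)+(m-1)=2n-m-1$, so case (iii) contributes $(2n-m-1)\,g_{l,m-1,n-1}$ and summing the three cases yields (\ref{recurrence-g}).

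Finally I would dispatch the boundary conditions quickly: when $m=0$ there are no edges, so $G$ consists of $n$ isolated vertices, giving $\len(G)=0$ and hence (\ref{boundary-g-1}); when $m=n=1$ the only possible injection $\pi:\{1\}\to\{1\}$ forces a self-loop, which is a single cycle, giving (\ref{boundary-g-2}). Thus the only delicate step is verifying the $2n-m-1$ count in case (iii); the rest is a direct case analysis parallel to Lemma for the real case.
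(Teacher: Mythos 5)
Your proof is correct and follows essentially the same route as the paper: classify the role of the last vertex $n$ (self-loop, isolated, or in a longer chain/cycle), delete it, and count the $2n-m-1$ reinsertions. The only difference is cosmetic — you decompose the reinsertion count as $2(n-m)+(m-1)$ (prepend to a chain, append to a chain, subdivide an edge) whereas the paper groups it as $(n-1)+\{(n-1)-(m-1)\}$, and your explicit splicing description of case (iii) is if anything more careful than the paper's.
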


\begin{proof}
We consider removing the vertex $n$ and the adjacent edges from the graph $G$.
There are three types of status about adjacent edges.

Case (i).
$\pi(n)=n$.
In this case, the vertex $n$ is contained in a cycle with length 1.
Removing the vertex $n$ and the edge $(n,\pi(n))$ yields
a graph whose values of $l,m$ and $n$ are one less than those of $G$.
This corresponds to the first term in the right-hand side of
 (\ref{recurrence-g}).

Case (ii).
Neither $\pi(n)$ nor $\pi^{-1}(n)$ exists.
In this case, the vertex $n$ is an isolated vertex.
Removing the vertex $n$ yields a graph
whose $l$ and $m$ are invariant and $n$ are one less than that of $G$.
This corresponds to the second term in the right-hand side of
 (\ref{recurrence-g}).

Case (iii).
Otherwise.
In this case,
the vertex $n$ is contained in a cycle with length more than or equal to 2,
or contained in a chain with length more than or equal to 1.
Remove the vertex $n$ and one edge adjacent to the vertex $n$.
This manipulation yields a graph whose $l$ is invariant, and
whose $m$ and $n$ are one less than those of $G$.
Conversely, when we rebuild the graph $G$ from this smaller graph,
there are $(n-1)+\{(n-1)-(m-1)\}=2n-m-1$ places where the vertex $n$ and
one edge can be inserted.
 This corresponds to the third term in the right-hand side of
 (\ref{recurrence-g}).
\QED
\end{proof}

\begin{theorem}
The generating function of the coefficient $g_{l,m,n}$ with respect to
the cycle number $l$,
$$ \Psi_{m,n}(\nu) = \sum_{l\ge 0} \nu^l g_{l,m,n}, $$
is given by
\begin{equation}
\label{solution-psi}
 \Psi_{m,n}(\nu) = {n \choose m} \prod_{i=1}^m (\nu +n-i)
 \quad (0\le m\le n,\,n\ge 1).
\end{equation}
Here, we use a convention $\prod_{i=1}^0=1$.
\end{theorem}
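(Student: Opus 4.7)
The plan is to mimic the strategy used for Theorem \ref{theorem:generating-function} in the undirected case: turn the recurrence for $g_{l,m,n}$ into a recurrence for the generating function $\Psi_{m,n}(\nu)$, pin down enough boundary values to guarantee uniqueness, and then verify that the proposed closed form satisfies both.

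First, I would multiply the recurrence (\ref{recurrence-g}) by $\nu^l$ and sum over $l\ge 0$. Since $g_{-1,m-1,n-1}=0$, the $g_{l-1,m-1,n-1}$ term contributes $\nu\,\Psi_{m-1,n-1}(\nu)$, and the remaining two terms are immediate, so that
\begin{equation*}
\Psi_{m,n}(\nu)=\Psi_{m,n-1}(\nu)+(\nu+2n-m-1)\,\Psi_{m-1,n-1}(\nu).
\end{equation*}
For boundary values, (\ref{boundary-g-1}) gives $\Psi_{0,n}(\nu)=1$ for $n\ge 1$, and since there is no injection from an $n$-set into an $(n{-}1)$-set we have $\Psi_{n,n-1}(\nu)=0$, so the recurrence forces $\Psi_{n,n}(\nu)=(\nu+n-1)\,\Psi_{n-1,n-1}(\nu)$, and (\ref{boundary-g-2}) together with iteration yields $\Psi_{n,n}(\nu)=\prod_{i=1}^n(\nu+n-i)$. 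These data determine $\Psi_{m,n}$ for all $0\le m\le n$, $n\ge 1$.

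Next I would check that the proposed closed form (\ref{solution-psi}) matches the boundary values (obvious for $m=0$ and for $m=n$) and satisfies the recurrence. Writing $\prod_{i=1}^m(\nu+n-i)=(\nu+n-1)\prod_{i=1}^{m-1}(\nu+n-1-i)$ and $\prod_{i=1}^m(\nu+n-1-i)=(\nu+n-1-m)\prod_{i=1}^{m-1}(\nu+n-1-i)$, one can factor a common product out of $\Psi_{m,n}-\Psi_{m,n-1}$ and reduce the identity to the purely algebraic equality
\begin{equation*}
\binom{n}{m}(\nu+n-1)-\binom{n-1}{m}(\nu+n-1-m)=\binom{n-1}{m-1}(\nu+2n-m-1),
\end{equation*}
which follows from $\binom{n}{m}=\tfrac{n}{m}\binom{n-1}{m-1}$, $\binom{n-1}{m}=\tfrac{n-m}{m}\binom{n-1}{m-1}$, and a routine expansion.

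There is no real obstacle; the only mild bookkeeping point is the boundary case $m=n$, where $\Psi_{m,n-1}$ is not defined by the graph count and must be set to $0$ by convention (consistent with the absence of injections), so that the recurrence alone pins down $\Psi_{n,n}$ and the induction on $n$ (with inner induction on $m$) closes cleanly.
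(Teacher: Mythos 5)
Your proof is correct and follows essentially the same route as the paper's: pass from the recurrence for $g_{l,m,n}$ to the recurrence $\Psi_{m,n}=\Psi_{m,n-1}+(\nu+2n-m-1)\Psi_{m-1,n-1}$, fix the boundary values $\Psi_{0,n}=1$ and $\Psi_{n,n}=\prod_{i=1}^n(\nu+n-i)$ via $\Psi_{n,n-1}=0$, and verify the closed form by factoring $\Psi_{m,n}-\Psi_{m,n-1}$ down to the same binomial identity the paper uses. No gaps.
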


\begin{proof}
Because $g_{-1,m,n}=0$, we see
\begin{equation}
\label{recurrence-psi}
 \Psi_{m,n} = \nu\Psi_{m-1,n-1} + \Psi_{m,n-1} + (2n-m-1) \Psi_{m-1,n-1} .
\end{equation}
The boundary conditions
$\Psi_{0,n}=1$ ($n\ge 1$) due to (\ref{boundary-g-1})
and $\Psi_{1,1}=\nu$ due to (\ref{boundary-g-2}) 
satisfy (\ref{solution-psi}).
In addition, since $\Psi_{n,n-1}=0$,
$$ \Psi_{n,n}=(\nu+n-1)\Psi_{n-1,n-1}=\cdots=\prod_{i=1}^n (\nu +n-i) \quad
 (n\ge 1). $$
Hence, it is sufficient to ensure that (\ref{solution-psi}) satisfies
 (\ref{recurrence-psi}).
Indeed, we have
\begin{align*}
\Psi_{m,n} - \Psi_{m,n-1}
=&
 {n \choose m} \prod_{i=1}^m (\nu +n-i)
-{n-1 \choose m} \prod_{i=1}^m (\nu +n-1-i) \\
=&
 {n-1 \choose m-1}\frac{1}{m} \prod_{i=1}^{m-1} (\nu +n-1-i) \\
& \qquad \times
 \{ n (\nu +n-1)-(n-m) (\nu +n-1-m) \} \\
=& (2n-m-1) \Psi_{m-1,n-1} + \nu\Psi_{m-1,n-1}.
\end{align*}
%
\QED
\end{proof}

\begin{remark}
Comparing (\ref{solution-psi}) with
 (\ref{mgf-stirling}) in Remark \ref{remark:f-stirling}, we have
$$ g_{l,m,n} = {n \choose m} s_n(m,l), $$
where
$s_n(m,l)$ is the noncentral Stirling numbers of the first kind.
In particular, $g_{l,n,n} = s_n(n,l)$
is the Stirling numbers of the first kind.
Let $V=\{1,\ldots,n\}$ and let $\pi : V\to V$ (bijection).
It is well-known that the Stirling number of the first kind $s_n(n,l)$
is the number of directed graphs $(V,E)$,
$E = \{(i,\pi(i)) \mid i \in V \}$ having $l$ cycles (\cite{Stanley2000}).
\end{remark}

\begin{corollary}
$$ \Psi_{m,n}(1)
= {n \choose m} n(n-1)\cdots (n-m+1)
= {n \choose m}^2 m! $$
is the number of directed graphs $G$, and
$$
\Psi_{m,n}(0)
= {n \choose m} (n-1)(n-2)\cdots (n-m) \\
= \frac{n! (n-1)!}{m! \, (n-m)! (n-m-1)!}
$$
is the number of directed graphs $G$ without cycles.
\end{corollary}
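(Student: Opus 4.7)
The plan is to obtain both identities by direct substitution into the closed-form expression $\Psi_{m,n}(\nu) = \binom{n}{m}\prod_{i=1}^m (\nu + n - i)$ established in the preceding theorem, followed by routine algebraic rewriting. Since $\Psi_{m,n}(\nu) = \sum_{l \ge 0} \nu^l g_{l,m,n}$ by the definition of the generating function, specializing $\nu$ is what extracts the desired enumerative counts.

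For the first identity, I would observe that $\Psi_{m,n}(1) = \sum_{l \ge 0} g_{l,m,n}$ sums $g_{l,m,n}$ over all cycle counts $l$, hence counts all directed graphs $G$ with vertex set $V$ and edge set determined by a pair $(V_1,\pi)$, regardless of how many cycles $G$ has. Substituting $\nu = 1$ into the theorem gives $\binom{n}{m}\prod_{i=1}^m (n-i+1) = \binom{n}{m}\cdot n(n-1)\cdots (n-m+1)$, and rewriting the falling factorial as $n!/(n-m)!$ and absorbing it into the binomial yields $\binom{n}{m}^2 m!$. As an independent sanity check, this matches the obvious direct count: $\binom{n}{m}$ choices for $V_1 \subset V$, times $n(n-1)\cdots(n-m+1)$ injections $\pi : V_1 \to V$.

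For the second identity, I would note that at $\nu = 0$ only the $l=0$ term survives in $\sum_{l} \nu^l g_{l,m,n}$, so $\Psi_{m,n}(0) = g_{0,m,n}$ is precisely the number of such graphs with no cycles. Substituting $\nu = 0$ gives $\binom{n}{m}\prod_{i=1}^m (n-i) = \binom{n}{m}(n-1)(n-2)\cdots(n-m)$, and a short manipulation recasts the product as $(n-1)!/(n-m-1)!$, yielding $\frac{n!\,(n-1)!}{m!\,(n-m)!\,(n-m-1)!}$.

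There is essentially no obstacle here: the corollary is the $\nu = 1$ and $\nu = 0$ specializations of the formula already proved, combined with the evident fact that at $\nu = 1$ each graph $G$ is weighted equally (giving the total count) while at $\nu = 0$ only the acyclic ones ($l=0$) contribute. The only care needed is the elementary rewriting of the two products into the two alternative closed forms given in the statement.
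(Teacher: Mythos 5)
Your proof is correct and is exactly the intended argument: the paper states this corollary without proof because it is the direct specialization of the theorem's formula at $\nu=1$ (summing $g_{l,m,n}$ over all $l$, hence counting all graphs) and $\nu=0$ (retaining only $g_{0,m,n}$, the acyclic count), followed by the same elementary rewriting of the products. Nothing to add.
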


\subsection{Degenerate cases}

\subsubsection{The noncentral chi-square distribution}

As in the real case, we can obtain several identities for moments
by assuming that the parameters $\Sigma$ and $\Delta$ have
particular kinds of structures.
First, we consider the case where
$\Sigma=(\sigma_{ij})$, $\sigma_{ij}\equiv 2$,
and 
$\Delta=(\delta_{ij})$, $\delta_{ij}\equiv \delta$.
Then, every element of $W$ has the same value $w$, say, with probability one,
and
the distribution of $w$ is the noncentral chi-square distribution
 $\chi^2_{2\nu}(\delta)$ with $2\nu$ degrees of freedom and the noncentrality
parameter $\delta$.
The $n$th moment of $w\sim\chi^2_{2\nu}(\delta)$ is
\begin{align*}
E[w^n]
& = E[w_{1\overline{1}} \cdots w_{n\overline{n}}] \\
& = \sum_{m=0}^n \sum_{l\ge 0} \nu^l g_{l,m,n} 2^m \delta^{n-m}
  = \sum_{m=0}^n \Psi_{m,n}(\nu) 2^m \delta^{n-m} \\
& = \sum_{m=0}^n {n \choose m} \prod_{i=1}^m(\nu+n-i) 2^m \delta^{n-m}
  = \sum_{m=0}^n {n \choose m} \prod_{i=1}^m(2\nu+2(n-i)) \delta^{n-m}.
\end{align*}
This coincides with the formula (\ref{moment-noncentralchisq})
obtained from the real Wishart distribution.

\subsubsection{Moments of a bivariate chi-square distribution
 associated with the complex Wishart distribution}

Let $(w_{ij})\sim CW_2(\nu,\Sigma)$ be a $2\times 2$ central complex
Wishart matrix.  We consider a particular structure of the parameter:
$$ \Sigma = 2 \begin{pmatrix} 1 & \overline\rho \\ \rho & 1 \end{pmatrix},
 \quad\rho\in\C. $$
The diagonal elements $(w_{11},w_{22})$ are distributed according
to a sort of bivariate chi-square distribution, since the marginal distribution
of $w_{11}$ and $w_{22}$ are the chi-square distribution $\chi^2_{2\nu}$
and they are correlated.  At a glance, $(w_{11},w_{22})$ has a different
distribution from Kibble's distribution since it has a different origin.
However, from (\ref{mgf-complex}), the moment generating function is
\begin{align*}
E[e^{ t_{11}w_{11} +  t_{22}w_{22} }]
&= \det\begin{pmatrix} 1 - 2 t_{11} & - 2 \overline\rho t_{11} \\
                     -2 \rho t_{22} & 1 - 2 t_{22} \end{pmatrix}^{-\nu} \\
&= (1 - 2 t_{11} - 2 t_{22} + 4 t_{11} t_{22} - 4 \rho\overline{\rho}
 t_{11} t_{22})^{-\nu},
\end{align*}
which is equal to the moment generating function (\ref{mgf-bivariategamma})
of Kibble's bivariate chi-square distribution with $\nu$ and $\rho$
replaced by $2\nu$ and $\sqrt{\rho\overline{\rho}}$, respectively.

\subsubsection{Moments of a $2\times 2$ complex Wishart distribution}

Consider a $2\times 2$ complex Wishart matrix
$(w_{ij})\sim CW_2(\nu,I)$.
We first show that
$$ E[w_{12}^a w_{21}^{a'} w_{11}^b w_{22}^c]=0 \quad\text{if $a\ne a'$}. $$
Let $Z_t$ be complex Gaussian random variables from which the Wishart
matrix $W$ is constructed.  Since $E[Z_t]=0$, its distribution is invariant
when the first element of $Z_t$ is multiplied by $\ii$.
On the other hand, this manipulation causes
$$ \begin{pmatrix} w_{11} & w_{12} \\ w_{21} & w_{22} \end{pmatrix} 
 \mapsto 
   \begin{pmatrix} w_{11} & \ii w_{12} \\ -\ii w_{21} & w_{22} \end{pmatrix}.
$$
Therefore,
\begin{align*}
E[w_{12}^a w_{21}^{a'} w_{11}^b w_{22}^c] 
&= E[(\ii w_{12})^a (-\ii w_{21})^{a'} w_{11}^b w_{22}^c] \\
&= \ii^{a-a'} E[w_{12}^a w_{21}^{a'} w_{11}^b w_{22}^c].
\end{align*}
The left- and right-hand sides become 0 unless $a=a'$.

Substituting
$\Theta=(t_{ij})'$ and $\Delta=0$ into (\ref{mgf-complex}),
we obtain the moment generating function
\begin{align}
E[e^{t_{12} w_{12}+t_{21} w_{21}+t_{11} w_{11}+t_{22} w_{22}}]
&= \det\begin{pmatrix} 1 - t_{11} & -t_{12} \\
                          -t_{21} & 1 - t_{22} \end{pmatrix}^{-\nu}
\nonumber \\
&= (1 - t_{11} - t_{22} + t_{11} t_{22} - t_{12} t_{21})^{-\nu}.
\label{mgf-2x2-complex}
\end{align}
Let
$$ \phi_\nu(u,v,w) = (1-u-v+uv-w)^{-\nu}, \quad 
 \phi_\nu^{(k)}(u,v,w) = \left(\frac{\partial}{\partial w}\right)^k
 \phi_\nu(u,v,w). $$
Then,
$$
(\ref{mgf-2x2-complex}) = \phi_\nu(t_{11},t_{22},t_{12}t_{21})
 = \sum_{k\ge 0} \frac{(t_{12}t_{21})^k}{k!} \phi_\nu^{(k)}(t_{11},t_{22},0).
$$
On the other hand, the moment generating function
of the $2\times 2$ real Wishart distribution is rewritten as
$$
(\ref{mgf-2x2-real}) = \phi_{\nu/2}(2t_{11},2t_{22},t_{12}^2)
 = \sum_{k\ge 0} \frac{t_{12}^{2k}}{k!} \phi_{\nu/2}^{(k)}(2t_{11},2t_{22},0).
$$
Comparing the two functions, we can immediately obtain
the moments of the $2\times 2$ complex Wishart distribution
from the results for the real case in Theorem \ref{theorem:2x2wishart}.

\begin{theorem}[Moment of the $2\times 2$ complex Wishart distribution]
Let $a,b$, and $c$ be nonnegative integers.  Then,
\begin{align*}
E[w_{11}^b (w_{12}w_{21})^a w_{22}^c]
= a! \prod_{i=1}^{a}(\nu+a-i)
 \prod_{i=1}^{b}(\nu+a+b-i) \prod_{i=1}^{c}(\nu+a+c-i).
\end{align*}
\end{theorem}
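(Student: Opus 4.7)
The plan is to extract the moment directly from the complex moment generating function (\ref{mgf-2x2-complex}) by exploiting the fact that it depends on the off-diagonal parameters only through the product $t_{12}t_{21}$. Writing the MGF as $\phi_\nu(t_{11},t_{22},t_{12}t_{21})$, Taylor expansion in the third slot gives
\begin{equation*}
\phi_\nu(t_{11},t_{22},t_{12}t_{21})=\sum_{a\ge 0}\frac{(t_{12}t_{21})^a}{a!}\,\phi_\nu^{(a)}(t_{11},t_{22},0).
\end{equation*}
On the other hand, the ordinary MGF expansion produces a quadruple sum over $t_{11}^b t_{22}^c t_{12}^a t_{21}^{a'}$; by the vanishing $E[w_{12}^a w_{21}^{a'}w_{11}^b w_{22}^c]=0$ for $a\ne a'$ (established in the discussion preceding the theorem), this collapses to a single sum indexed by $a$. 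Matching like powers of $t_{12}$ and $t_{21}$ then gives
\begin{equation*}
E[w_{11}^b(w_{12}w_{21})^a w_{22}^c]=a!\,\bigl[\text{coefficient of }t_{11}^b t_{22}^c/(b!\,c!)\text{ in }\phi_\nu^{(a)}(t_{11},t_{22},0)\bigr].
\end{equation*}

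Next I would compute $\phi_\nu^{(a)}(t_{11},t_{22},0)$ explicitly. Since $\phi_\nu(u,v,w)=((1-u)(1-v)-w)^{-\nu}$, differentiating $a$ times in $w$ and setting $w=0$ yields
\begin{equation*}
\phi_\nu^{(a)}(t_{11},t_{22},0)=\nu(\nu+1)\cdots(\nu+a-1)\,(1-t_{11})^{-(\nu+a)}(1-t_{22})^{-(\nu+a)}.
\end{equation*}
Expanding each factor by the negative binomial series $(1-t)^{-\alpha}=\sum_k \alpha(\alpha+1)\cdots(\alpha+k-1)\,t^k/k!$ and reading off the coefficient of $t_{11}^b t_{22}^c/(b!\,c!)$ leaves
\begin{equation*}
E[w_{11}^b(w_{12}w_{21})^a w_{22}^c]=a!\cdot\!\prod_{i=1}^{a}(\nu+a-i)\cdot\!\prod_{i=1}^{b}(\nu+a+b-i)\cdot\!\prod_{i=1}^{c}(\nu+a+c-i),
\end{equation*}
which is the stated formula after the routine identification of each rising factorial as a product $\prod_{i=1}^m(\alpha+m-i)$.

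The calculations are essentially mechanical; the only conceptual point that requires care is the collapse of the double sum over $(a,a')$ into a single sum, which rests on the sign-invariance argument already given for the complex case. An alternative route, alluded to in the sentence preceding the theorem, is to avoid the explicit binomial expansion by comparing the complex MGF $\phi_\nu(t_{11},t_{22},t_{12}t_{21})$ with the real MGF $\phi_{\nu/2}(2t_{11},2t_{22},t_{12}^2)$: since both inherit the same $w$-dependence from $\phi$, and Theorem \ref{theorem:2x2wishart} determines $\phi_{\nu/2}^{(a)}(2t_{11},2t_{22},0)$ via the real $2\times 2$ moments, one can transfer that information to $\phi_\nu^{(a)}(t_{11},t_{22},0)$ and conclude. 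Either route leads to the same identity, so I would prefer the direct calculation above, which is shorter and self-contained.
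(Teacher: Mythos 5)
Your proposal is correct, but it takes a genuinely different route from the paper. The paper does not compute anything new for this theorem: it observes that the complex MGF is $\phi_\nu(t_{11},t_{22},t_{12}t_{21})$ while the real MGF (\ref{mgf-2x2-real}) is $\phi_{\nu/2}(2t_{11},2t_{22},t_{12}^2)$, expands both in the third slot, and \emph{transfers} the already-proved real formula of Theorem \ref{theorem:2x2wishart} (itself established by the combinatorial graph-counting argument) to read off $\phi_{\nu/2}^{(k)}$ and hence $\phi_\nu^{(k)}$. You instead exploit the factorization $\phi_\nu(u,v,w)=\bigl((1-u)(1-v)-w\bigr)^{-\nu}$ to compute $\phi_\nu^{(a)}(t_{11},t_{22},0)=\nu(\nu+1)\cdots(\nu+a-1)\,(1-t_{11})^{-(\nu+a)}(1-t_{22})^{-(\nu+a)}$ in closed form and finish with the negative binomial series; your bookkeeping of the factorials (the collapse of the $(a,a')$ double sum via the vanishing lemma, and the $a!\,b!\,c!$ normalization) checks out and reproduces the stated products. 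What your route buys is self-containment: it proves the complex $2\times 2$ formula without invoking Theorem \ref{theorem:2x2wishart} at all, and in fact the same comparison run backwards would give an independent analytic verification of that real-case theorem. What the paper's route buys is economy and thematic consistency: having done the combinatorial work once in the real case, the complex result follows in one line, keeping the graph-enumeration machinery as the single source of all the explicit moment formulas. You correctly identify the paper's comparison argument as the ``alternative route''; the paper simply makes the opposite choice of which argument to feature.
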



\begin{thebibliography}{99}

%

\bibitem[Bai (1999)]{Bai1999}
Bai, Z.\,D.\ (1999).
Methodologies in spectral analysis of large dimensional random matrices,
 A review.
{\em Statist. Sinica\/}, {\bf 9}, 611--677.





\bibitem[Goodman (1963)]{Goodman1963}
Goodman, N.\,R.\ (1963).
Statistical analysis based on a certain multivariate complex Gaussian
 distribution (An introduction).
{\em Ann.\ Math.\ Statist.\/}, {\bf 34}, 
152--177.

\bibitem[Graczyk, et al.\ (2003)]{Graczyk-etal2003}
Graczyk, P., Letac, G.\ and Massam, H.\ (2003).
The complex Wishart distribution and the symmetric groups.
{\em Ann.\ Statist.\/}, {\bf 31}, 287--309.

\bibitem[Graczyk, et al.\ (2005)]{Graczyk-etal2005}
Graczyk, P., Letac, G.\ and Massam, H.\ (2005).
The hyperoctahedral group, symmetric group representations and the moments
of the real Wishart distribution.
{\em J.\ Theor.\ Probab.\/}, {\bf 18}, 1--42.

\bibitem[Johnson, et al.(1995)]{Johnson-etal1995}
Johnson, N.\,L., Kotz, S.\ and Balakrishnan, N.\ (1995).
{\em Continuous Univariate Distributions\/}, Vol.\ 2, 2nd ed. 
Wiley-Interscience.

\bibitem[Kibble (1941)]{Kibble1941}
Kibble, W.\,F.\ (1941).
A two-variate gamma type distribution.
{\em Sankhya\/}, {\bf 5A}, 137--150.

\bibitem[Koutras (1982)]{Koutras1982}
Koutras, M.\ (1982).
Noncentral Stirling numbers and some applications.
{\em Discrete Math.\/}, {\bf 42}, 73--89.

\bibitem[Kuriki and Takemura (1996)]{Kuriki-Takemura1996}
Kuriki, S.\ and Takemura, A.\ (1996).
Asymptotic expansion of null distribution of likelihood ratio statistic
 in multiparameter exponential family to an arbitrary order.
{\em Probability Theory and Mathematical Statistics:
 Proceedings of the Seventh Japan-Russia Symposium\/}, 244--255.

\bibitem[Letac and Massam (2008)]{Letac-Massam2008}
Letac, G.\ and Massam, H.\ (2008).
The noncentral Wishart as an exponential family, and its moments.
{\em J.\ Multivariate Anal.\/}, {\bf 99}, 1393--1417.

\bibitem[Lu and Richards (2001)]{Lu-Richards2001}
Lu, I-L.\ and Richards, D.\,St.\,P.\ (2001).
MacMahon's master theorem, representation theory, and moments of
 Wishart distributions. 
{\em Adv.\ Appl.\ Math.\/}, {\bf 27}, 531--547.

\bibitem[Maiwald and Kraus (2000)]{Maiwald-Kraus2000}
Maiwald, D.\ and Kraus, D.\ (2000).
Calculation of moments of complex Wishart and complex inverse Wishart
 distributed matrices.
{\em IEE Proc.-Radar, Sonar Navig\/}, {\bf 147}, 162--168.

\bibitem[McCullagh (1987)]{McCullagh1987}
McCullagh, P.\ (1987).
{\em Tensor Methods in Statistics\/}.
Chapman \& Hall/CRC.

\bibitem[Morris (1982)]{Morris82}
Morris, C.\,N.\ (1982).
Natural exponential families with quadratic variance functions.
{\em Ann.\ Statist.\/}, {\bf 10}, 65--80.

\bibitem[Muirhead (1982)]{Muirhead1982}
Muirhead, R.\,J.\ (1982).
{\em Aspects of Multivariate Statistical Theory\/}.
John Wiley \& Sons.

\bibitem[Nadarajah and Kotz(2006)]{Nadarajah-Kotz2006}
Nadarajah, S.\ and Kotz, S.\ (2006).
Product moments of Kibble's bivariate gamma distribution.
{\em Circuits Systems Signal Process.\/}, {\bf 25}, 567--570.


\bibitem[Stanley (2000)]{Stanley2000}
Stanley, R.\,P.\ (2000).
{\em Enumerative Combinatorics\/}, Vol.\ 1, 2nd ed.
Cambridge Univ.\ Press.

\bibitem[Takemura (1991)]{Takemura1991}
Takemura, A.\ (1991).
{\em Foundations of Multivariate Statistical Inference\/} (in Japanese). 
Kyoritsu Shuppan.

\bibitem[Vere-Jones (1988)]{Vere-Jones1988}
Vere-Jones, D.\ (1988).
A generalization of permanents and determinants.
{\em Linear Algebra Appl.\/}, {\bf 111}, 119--124.


\bibitem[Wishart (1928)]{Wishart1928}
Wishart, J.\ (1928).
The generalised product moment distribution in samples from a normal
 multivariate population.
{\em Biometrika\/}, {\bf 20A}, 
32--52.

\bibitem[Withers and Nadarajah (2006)]{Withers-Nadarajah2006}
Withers, C.\ and Nadarajah, S.\ (2006)
Simple representations for Hermite polynomials.
{\em Electronics Letters\/}, {\bf 42}, 1368--1369.

\end{thebibliography}
\end{document}